\let\cl@chapter\undefined
\DeclareMathOperator{\Ran}{Ran}
\DeclareMathOperator{\Ker}{Ker}
\DeclareMathOperator*{\argmin}{argmin}
\newcommand{\cA}{\mathcal{A}}
\newcommand{\cQ}{\mathcal{Q}}
\newcommand{\cI}{\mathcal{I}}
\newcommand{\cN}{\mathcal{N}}
\newcommand{\cG}{\mathcal{G}}
\newcommand{\cT}{\mathcal{T}}
\newcommand{\fD}{\mathfrak{D}}
\newcommand{\fq}{\mathfrak{q}}
\newcommand{\fp}{\mathfrak{p}}
\newcommand{\fr}{\mathfrak{r}}
\newcommand{\bQ}{\bm{Q}}
\newcommand{\bR}{\bm{R}}
\newcommand{\bq}{\bm{q}}
\newcommand{\br}{\bm{r}}
\newcommand{\bw}{\bm{w}}
\newcommand{\bg}{\bm{g}}
\newcommand{\bnu}{\bm{\nu}}
\newcommand{\bmu}{\mu}
\newcommand{\sB}{\mathsf{B}}
\newcommand{\sPi}{\mathsf{\Pi}}
\newcommand{\sT}{\mathsf{T}}
\newcommand{\sS}{\mathsf{S}}
\newcommand{\sI}{\mathsf{I}}
\newcommand{\sbq}{\mathsf{\bq}}
\DeclareMathOperator{\vdiv}{div}
\renewcommand{\div}{\nabla \cdot}
\newcommand{\curl}{\nabla \times}
\numberwithin{equation}{section}
\journalname{ }
\begin{document}

\title{
	Deep learning based reduced order modeling of Darcy flow systems with local mass conservation\thanks{
	This project has received funding from the European Union's Horizon 2020 research and innovation program under the Marie Skłodowska-Curie grant agreement No. 101031434 -- MiDiROM.
	The present research is part of the activities of the project Dipartimento di Eccellenza 2023-2027, funded by MUR. The authors are members of the Gruppo Nazionale per il Calcolo Scientifico (GNCS) of the Istituto Nazionale di Alta Matematica (INdAM).
	}
}



\author{Wietse M. Boon 
	\and Nicola R. Franco
	\and Alessio Fumagalli
	\and Paolo Zunino
}

\institute{MOX, Department of Mathematics, Politecnico di Milano, Piazza Leonardo da Vinci 32, 20133, Milan, Italy,
              \email{wietsemarijn.boon@polimi.it}
}


\date{Received: date / Accepted: date}

\maketitle

\begin{abstract}
    
    We propose a new reduced order modeling strategy for tackling parametrized Partial Differential Equations (PDEs) with linear constraints, in particular Darcy flow systems in which the constraint is given by mass conservation. Our approach employs classical neural network architectures and supervised learning, but it is constructed in such a way that the resulting Reduced Order Model (ROM) is guaranteed to satisfy the linear constraints exactly. The procedure is based on a splitting of the PDE solution into a particular solution satisfying the constraint and a homogenous solution. The homogeneous solution is approximated by mapping a suitable potential function, generated by a neural network model, onto the kernel of the constraint operator; for the particular solution, instead, we propose an efficient spanning tree algorithm. Starting from this paradigm, we present three approaches that follow this methodology, obtained by exploring different choices of the potential spaces: from empirical ones, derived via Proper Orthogonal Decomposition (POD), to more abstract ones based on differential complexes. All proposed approaches combine computational efficiency with rigorous mathematical interpretation, thus guaranteeing the explainability of the model outputs. To demonstrate the efficacy of the proposed strategies and to emphasize their advantages over vanilla black-box approaches, we present a series of numerical experiments on fluid flows in porous media, ranging from mixed-dimensional problems to nonlinear systems. This research lays the foundation for further exploration and development in the realm of model order reduction, potentially unlocking new capabilities and solutions in computational geosciences and beyond.
%
\keywords{neural networks \and reduced order modeling \and linear constraints}
\end{abstract}


\section{Introduction}
\label{sec: introduction}

Numerical simulation of mathematical models based on partial differential equations (PDEs) is an essential component of many engineering applications. In areas such as geomechanics, hydrology, and exploration of mineral resources, Darcy-type PDEs play a fundamental role, modeling the complex phenomenon of fluid flow in porous media \cite{barbeiro2010priori,eberhard2006simulation,close1993natural}. Typically, in the case of stationary problems, these are of the form
\begin{align}
    \label{eq:darcy-type}
    A_{\bq} \bq + \nabla p &= \bg, & 
    \div \bq &= f,
\end{align}
where $\bq$ and $p$ denote the fluid velocity and its pressure field, respectively. Here, $\bg$ and $f$ are problem data, while $A_{\bq}$ is some positive operator, possibly nonlinear.

Up to supplementing the PDE with suitable boundary conditions, problems such as \eqref{eq:darcy-type} can be easily addressed with classical numerical schemes, providing accurate and reliable approximations of the PDE solution. Nonetheless, despite the incredibly flourishing literature of Numerical Analysis for PDEs, the computational cost entailed by solving a boundary value problem such as \eqref{eq:darcy-type} becomes quickly unbearable when considering many-query applications, where one is required to solve the PDE repeatedly for varying configurations of the system (changes in the operator, in the problem data or in the boundary conditions). For example, in the realm of reservoir geomechanics, the complexity of physics and field configurations, coupled with the need for high-fidelity numerical solutions, demands significant computational resources, especially in scenarios that involve numerous queries or simulations \cite{reservoirgeo}. 
These situations are typically encountered when dealing with optimal control and inverse problems, where the PDE depends on multiple parameters, and the goal is either to optimize their values or to retrieve them from raw measurements. This challenge is not isolated to computational geosciences, but is also prevalent in various computational domains where numerical simulations are used to solve complex physical models \cite{chinesta,doi:10.1137/130932715,doi:10.1137/16M1082469}. 

To address these issues, a well-established approach is to replace the expensive calls to the numerical solver with a cheaper, but accurate surrogate, a paradigm that is also known as Reduced Order Modeling (ROM) \cite{hesthaven_pagliantini_rozza_2022,quarteroni2015reduced}. Here, we shall focus on data-driven non-intrusive ROMs based on Deep Learning techniques, which have recently proven extremely flexible and powerful, see, e.g., \cite{kutyniok,rozzarev}. The authors and coworkers have also recently contributed to this study \cite{fresca2021comprehensive,franco2023deep}. The driving idea behind these approaches is to construct the ROM by learning from a trusted high-fidelity solver, the Full Order Model (FOM), whose quality is taken as a ground truth reference.
In practice, the FOM is exploited \emph{offline} to sample a collection of PDE solutions, from which the ROM is left to learn the implicit mapping between the model parameters and the corresponding PDE solutions. Then, after the training phase, the resulting ROM can be queried \emph{online} at any time at a negligible computational cost.
Compared to other, more traditional strategies, such as the Reduced Basis method, Deep Learning based ROMs have the advantage of being completely non-intrusive, which makes them extremely fast online, and intrinsically non-linear, which provides them with the ability to handle both complex and singular behaviors.

However, being completely data-driven, these approaches are completely unaware of the underlying physics, which typically results in ROMs yielding unphysical results. For example, for problems such as \eqref{eq:darcy-type}, a naive data-driven ROM will likely violate the physical constraint regarding mass conservation. That is, given a suitable configuration of the model parameters, it will produce an output $\tilde{\bq}$ that is a good proxy of the ground truth solution $\tilde{\bq}\approx\bq$, but for which $\div\tilde{\bq}\neq f$.
Clearly, there are many engineering applications for which this phenomenon is undesirable or even unacceptable. Although we may tolerate a small error in the PDE solution, we might also demand that some of the physical constraints are satisfied \emph{exactly}. 

In Machine Learning terms, this corresponds to including the physical constraints within the \emph{inductive bias} of the learning algorithm, thus making the model aware of the underlying mathematical structure. The term inductive bias, in fact, denotes the set of pre-existing assumptions, or beliefs, that a model uses to make predictions, effectively allowing it to extrapolate outside of the training set. In the context of data-driven ROMs, the introduction of inductive biases serves to mitigate the phenomenon of \emph{overfitting} \cite{ying2019overview}, and to accelerate the convergence of deep learning models toward the desired solution. When it comes to physical systems, this translates to the potential inclusion of biases based on first principles, such as conservation laws or geometric symmetries \cite{10.5555/3524938.3525235,LEE2020108973,doi.org/10.1007/s10851-022-01114-x}, among others.

Due to their importance in flow and electromagnetic modeling, the aim of satisfying conservation laws has spawned a broad class of deep-learning approaches. An overview is provided in \cite{Hansen2023}, which emphasizes that enforcing constraints through the loss function, as in Physics-Informed Neural Networks (PINNs) \cite{Raissi2019686,rozzarev} and in their conservative variation \cite{jagtap2020conservative,mao2020physics}, generally does not produce satisfactory results.
Pioneering efforts in the development of exact constraint/preserving networks are reported in \cite{boesen2022neural,ruthotto2020deep}, where the authors discuss the integration of auxiliary algebraic trajectory information into neural networks for dynamical systems, drawing insights from differential-algebraic equations and differential equations in manifolds. 
Along the same lines, significant progress has been achieved in \cite{trask2022enforcing}, where the authors introduce the concept of data-driven exterior calculus, a new technique that offers structure-preserving surrogate models with well-posedness guarantees. The method ensures that machine-learned models strictly enforce physics, even in challenging training scenarios, showing promising results in subsurface flows and electromagnetic modeling. Within the same area of study, we also mention the work by Beuclet et al. \cite{beucler2021enforcing}, in which analytical constraints are enforced by mapping into the kernel of the constraint matrix. 

Here, we restrict our attention to linear constraints, with particular emphasis on equations in divergence form, as in \eqref{eq:darcy-type}.
We propose a Deep Learning strategy to model order reduction that combines computational efficiency with rigorous mathematical interpretation, thus guaranteeing the complete explainability of the model outputs. 
The method relies on separating the flux solution into two components: a homogeneous part and a nonhomogeneous part that satisfies the constraint of interest. Then, we approximate the two separately by proceeding as follows. To approximate the homogeneous solution, we combine the action of a neural network architecture, mapping the model parameters onto a suitable potential function, with that of a kernel projector, effectively binding the output within the null space of the constraint operator. Then, at the same time, we exploit an efficient spanning tree algorithm to retrieve the nonhomogeneous part, which is guaranteed to satisfy the linear constraint exactly. 

Our approach is reminiscent of \cite{rave2019locally} in the context of reduced basis methods. There, the observation is made that, when employing Proper Orthogonal Decomposition (POD) over solenoidal snapshots, each basis function of the reduced space is itself solenoidal. A \emph{shift} of the solution is introduced by means of local flux equilibration. We expand on this idea by exploiting a broader class of solenoidal spaces (not necessarily the POD one) and by providing a shift function that can be computed with linear complexity via the spanning tree algorithm.
In this regard, we also highlight that our construction shares many similarities with \cite{beucler2021enforcing}, but further develops the ideas presented therein by considering non-homogeneous constraints and by providing explicit examples of kernel mappings.

The remainder of this article is structured as follows. The next two subsections introduce the model problem and the adopted notational conventions. \Cref{sec:theory} introduces the approach from an abstract perspective and presents the basic assumptions underlying our construction. These concern the three main ingredients of our proposed ROM strategy: i) a right inverse operator, which we discuss in full detail in \Cref{sec: S_I}, ii) a kernel mapping, to which we devote the entirety of \Cref{sec: S0}, and a iii) potential neural network, thoroughly addressed in \Cref{sec: Neural networks}. Combining these three components leads to constraint-preserving neural network methods, effectively presented in \Cref{sec: approaches}, together with several possible implementations. \Cref{sec: Numerical results} contain numerical experiments that illustrate the performance of the proposed methods compared to simple black-box regression models. Finally, the concluding remarks are given in \Cref{sec: Concluding remarks}.

\subsection{Model Problem}
On a Lipschitz domain $\Omega$, we consider stationary flow problems of the form: find the velocity $\bq$ and pressure $p$ that satisfy
\begin{subequations} \label{eq: model problem}
\begin{align}
    A_{\bq} \bq + \nabla p &= \bg, & 
    \div \bq &= f, 
\end{align}
in which $A_{\bq}$ is a 
positive operator (possibly nonlinear), for given boundary conditions, $\bg$ is a vector source term, and $f$ is a mass source term. The boundary conditions are 
\begin{align}
	p &= p_0, & \text{on } &\partial_p \Omega, &
	\bq \cdot \bnu &= q_0, & \text{on } &\partial_q \Omega,
\end{align}
\end{subequations}
with $\bnu$ the outward unit vector normal to $\partial \Omega$ and the disjoint decomposition $\partial_p \Omega \cup \partial_q \Omega = \partial \Omega$ for which we assume that $|\partial_p \Omega| > 0$.
We focus on two exemplary flow regimes that have this structure:
\begin{itemize}[noitemsep, topsep = 0pt]
    \item Darcy flow: $A_{\bq} \bw := \kappa^{-1} \bw$ for some symmetric positive definite tensor $\kappa$.
    \item Darcy-Forchheimer flow: $A_{\bq} \bw := (\kappa_0^{-1} + \kappa_1^{-1} |\bq|)\bw$ for symmetric positive definite tensors $\kappa_0, \kappa_1$.
\end{itemize}

We assume that this is a parameterized partial differential equation and we collect all relevant parameters of $A_{\bq}, B, \bg,$ and $f$ into the vector $\mu \in M$, where $M$ is the parameter space.
When necessary, we will emphasize the dependency on $\mu$ by using a superscript, e.g., $f^\mu$ instead of $f$. 

We assume that the model problem has been discretized and that we seek the solution in finite-dimensional spaces $\bQ \times P$. For our model problem, the chosen spaces are the lowest-order Raviart-Thomas \cite{raviart-thomas-0} pair:
\begin{align}
    \bQ &:= \mathbb{RT}_0(\Omega_h), &
    P &:= \mathbb{P}_0(\Omega_h),
\end{align}
in which $\Omega_h$ is a simplicial tesselation of $\Omega$.


\subsection{Preliminaries and notation}

Given a Hilbert space $\bQ$, let $\bQ'$ denote its dual. For an operator $B: \bQ \to P'$, let $B^*$ denote its adjoint, so that $B^*:P\to\bQ'$. We will use the Sans Serif font to indicate the matrix equivalent of linear operators, that is, $\sB$ for $B$ with its transpose denoted by $\sB^\top$. Let $N_Q:=\dim(\bQ)$ such that $\bq \in \bQ$ can be represented as a vector $\sbq \in \mathbb{R}^{N_Q}$ containing the values of the degrees of freedom. Similarly, let $N_P$, $N_R$, and $N_M$ denote the dimensions of the finite-dimensional spaces $P$, $R$, and $M$, respectively.

\section{A potential-based approach to satisfy linear constraints}
\label{sec:theory}

We first cast the problem in a slightly more general framework. Let $\bQ$ and $P$ be Hilbert spaces. We consider a class of problems that have the following form:
Given functionals $(\bg, f) \in \bQ' \times P'$, find $(\bq, p) \in \bQ \times P$ such that
\begin{subequations} \label{eq: general problem}
\begin{align}
    A_{\bq} \bq - B^* p &= \bg,
    & \text{in }&\bQ',
    \label{eq: general constitutive law}\\
    B \bq &= f,
    & \text{in }&P'.
    \label{eq: general mass conservation}
\end{align}
\end{subequations}
Here, $A_{\bq} : \bQ \to \bQ'$ is a (non-)linear, positive operator, while $B : \bQ \to P'$ is linear and surjective. 
The variable $p$ is the Lagrange multiplier that enforces the constraint.

Our approach relies on a decomposition of $\bq \in \bQ$ into a \emph{particular} solution $\bq_f$ and a \emph{homogeneous} solution $\bq_0$,
\begin{align}
    \bq &= \bq_f + \bq_0, &
    B \bq_f &= f, &
    \bq_0 &\in \bQ_0 := \Ker(B, \bQ).
\end{align}

For the construction of the particular and homogeneous parts of the solution, we assume that the following key tools are available to us:
\begin{subequations}
\begin{enumerate}[label = A\arabic*., ref = A\arabic*]
    \item \label{ass: S_I}
    A computationally efficient operator $S_I : P' \to \bQ$ that is a right-inverse of $B$, i.e. for which
    \begin{align}
        B S_I  = I.
    \end{align}
    \item \label{ass: S_0}
    An auxiliary Hilbert space $\bR$, which we refer to as the \emph{potential space}, and a linear map $S_0: \bR \to \bQ_0$. Then, by definition, we have
    \begin{align}
        BS_0 = 0.
    \end{align}
    \item \label{ass: N}
    A neural network map $\cN : M \to \bR$ that is trained to make the following approximation,
    \begin{align}
        S_0 \cN \mu \approx (I - S_I B)\bq^\mu,
    \end{align}
    where $\bq^\mu \in \bQ$ is the true solution to \eqref{eq: general problem} associated with the values of the corresponding parameters $\mu\in M$.
\end{enumerate}
\end{subequations}

\noindent The following diagram illustrates the connection between the operators and spaces introduced above:
\begin{equation}
\begin{tikzcd}
    M \arrow[r, "\cN"] &
    \bR \arrow[r, "S_0"] &
    \bQ \arrow[r, shift left, "B"] &
    P' \arrow[l, shift left, "S_I"].
\end{tikzcd}
\end{equation}

We are now ready to describe the three-step solution technique, which is a generalization of the procedure presented in \cite{boon2023reduced}:
\begin{enumerate}[label=\arabic*., noitemsep]
    \item Compute the particular solution $\bq_f = S_I f \in \bQ$.
    \item Use the neural network and $S_0$ to compute the homogeneous solution $\tilde \bq_0 = S_0 \cN \mu$.
    \item Set $\tilde \bq = \bq_f + \tilde \bq_0$. The approximation $\tilde p$ can be postprocessed as:
    $$\tilde p
        =
        (S_I ^* B^*) \tilde p
        =
        S_I ^*(A_{\tilde \bq} \tilde \bq - \bg).
    $$
\end{enumerate}

\begin{lemma} \label{lem: satisfaction constraint}
    The approximate solution $\tilde \bq$ satisfies the linear constraint \eqref{eq: general mass conservation}, regardless of the precision of the mapping $\cN$.
\end{lemma}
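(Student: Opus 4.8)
The plan is to verify the constraint directly by applying the operator $B$ to the approximate solution and invoking the two defining identities of the construction. Since $\tilde \bq = \bq_f + \tilde \bq_0$ with $\bq_f = S_I f$ and $\tilde \bq_0 = S_0 \cN \mu$, and since $B$ is linear, I would first write
\begin{equation*}
    B \tilde \bq = B \bq_f + B \tilde \bq_0 = B S_I f + B S_0 \cN \mu.
\end{equation*}

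Next, I would evaluate the two terms separately. The first term uses Assumption~\ref{ass: S_I}, namely that $S_I$ is a right-inverse of $B$, so that $B S_I f = (B S_I) f = f$. The second term uses Assumption~\ref{ass: S_0}: because $S_0$ maps into the kernel $\bQ_0 = \Ker(B, \bQ)$, we have $B S_0 = 0$, and hence $B S_0 \cN \mu = 0$ for any argument $\cN \mu \in \bR$. Adding the two contributions yields $B \tilde \bq = f$, which is precisely the constraint \eqref{eq: general mass conservation}.

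The crucial observation, and the reason the statement holds \emph{regardless of the precision of $\cN$}, is that the homogeneous component lies in $\Ker(B, \bQ)$ by construction, independently of the value $\cN \mu$ returned by the network: the neural network only selects \emph{which} element of the kernel is added, never \emph{whether} the result remains in the kernel. Consequently, the training quality influences only the distance $\| \tilde \bq - \bq^\mu \|$ to the true solution, and not the exact satisfaction of the constraint.

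In truth there is no genuine obstacle in this argument; it is a one-line consequence of the linearity of $B$ together with the algebraic identities $B S_I = I$ and $B S_0 = 0$ guaranteed by the assumptions. The real difficulty is deferred to showing that operators $S_I$ and $S_0$ possessing exactly these properties can be constructed and evaluated efficiently, which is the content of the later sections (the spanning-tree right-inverse for $S_I$ in \Cref{sec: S_I} and the kernel mappings for $S_0$ in \Cref{sec: S0}). It is there, rather than in the present verification, that the substance of the method resides.
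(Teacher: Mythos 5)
Your proof is correct and coincides with the paper's own argument: both apply $B$ to the decomposition $\tilde \bq = S_I f + S_0 \cN\mu$ and use linearity together with the identities $BS_I = I$ (Assumption \ref{ass: S_I}) and $BS_0 = 0$ (Assumption \ref{ass: S_0}). Your added remarks on why the conclusion is independent of $\cN$ are accurate but not needed beyond the one-line computation.
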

\begin{proof}
By assumptions \ref{ass: S_I}-\ref{ass: S_0}, we have
$
    B \tilde \bq
    = B (\bq_f + \tilde \bq_0)
    = B (S_I f + S_0 \cN\mu)
    = f.
$
\qed\end{proof}

We dedicate the next sections to 
an in-depth discussion about the possible choices for the three key components of the proposed procedure. That is, we shall discuss how to choose: a right-inverse of $B$ (A1), a potential space supplied with a kernel-mapping (A2), and a suitable neural network architecture (A3).

\section{A right-inverse of the operator \texorpdfstring{$B$}{B}}
\label{sec: S_I}

The first component concerns an operator that can be used to generate a particular solution $\bq_f$ such that $B \bq_f = f$. 
In principle, any right-inverse of $B$ can be chosen: here, we shall focus on those admitting an explicit construction. The main idea is to choose an appropriate operator $E : \bQ \to \bQ$ such that $BE$ is invertible. 
Then, we may let
\begin{align} \label{eq: structure S_I}
    S_I  := E (B E)^{-1},
\end{align}
as it is straightforward to see that $BS_I = I$, as required by assumption \ref{ass: S_I}. Two examples of admissible operators $E$ are considered in the next two subsections.

\subsection{The Moore-Penrose inverse}
\label{sub: mp-inverse}
As a first example, we may set $E = LB^*$ for some $L: \bQ' \to \bQ$. 
However, in this work we require an exact, right-inverse operator to ensure a precise satisfaction of the constraint. 
The system $B L B^*$ therefore needs to be solved to machine precision for each input, which may be too computationally demanding, unless specific tailored solvers are available or if $\Ran(B)$ is of low dimensionality.

\begin{remark}
This structure was explored in \cite{boon2023reduced} by recognizing that the finite volume method using two-point flux approximation is of this form with $\mathsf{L}$ a diagonal matrix. 
\end{remark}



\subsection{A spanning tree solve for conservation equations}
\label{sub: spanning tree}

Next, we focus on the particular case in which $B$ is a divergence operator acting on the Raviart-Thomas space of lowest order, on a simplicial grid. In this case, we leverage the structure of the operator $B$ and the fact that $\mathbb{RT}_0$ has one degree of freedom per face to find an appropriate mapping $E$ for \eqref{eq: structure S_I}.
Our construction is similar in spirit to \cite[Sec. 7.3]{jiranek2010posteriori}.

We now detail the construction of a particular right-inverse $S_I: P' \to \bQ$, which we refer to as a \emph{spanning tree solve}. The idea goes as follows:
\begin{enumerate}[label = \arabic*.]
    \item Find a column of $\sB \in \mathbb{R}^{n \times m}$ that has exactly one entry $b_{i_0 j_0}$. The index $j_0$ corresponds to a facet degree of freedom in $\bQ$ that is on the boundary and $i_0$ is its neighboring element.
    \item Construct a graph $\cG$ with $n$ nodes and $m$ edges that has an incidence matrix with the same sparsity pattern as $\sB$. We emphasize that the graph nodes correspond to the mesh cells and the graph edges correspond to the mesh facets.
    \item Generate a spanning tree $\cT$ of $\cG$ with $i_0$ as its root. For example, we may use the tree generated by a breadth-first search of the graph, which is a computation of linear complexity. Note that $\cT$ contains all nodes of $\cG$ and a subset of its edges.
    \item Let $J$ be the set that contains the $(n - 1)$ edges of $\cG$ that are present in the tree $\cT$, complemented by the edge with index $j_0$.
    Let $\sPi \in \mathbb{R}^{n \times m}$ be the restriction operator on the $n$ facet degrees of freedom in the mesh that correspond to the edges in $J$.
    \item Due to the tree structure, the matrix $\sB \sPi^\sT \in \mathbb{R}^{n \times n}$ is triangular, up to the column/row permutation, and therefore leads to a system that is computationally easy to solve. We are now ready to define
    \begin{align}
        \sS_\sI &:= \sPi^\sT (\sB \sPi^\sT)^{-1}.
    \end{align}
\end{enumerate}

We emphasize that this choice of $S_I$ fits the format \eqref{eq: structure S_I} by setting $E$ as the adjoint of a restriction operator $\Pi$. In turn, Assumption \ref{ass: S_I} is directly satisfied. An example of a spanning tree is given in \Cref{fig:spanningtree}.

\begin{remark}
\label{remark: averaged-tree}
Note that the set of right-inverses of $B$, namely
 \begin{align}
    \{S:P'\to\bQ\;|\;BS=I\},     
 \end{align}
 is a convex set. This observation can be used to generate other candidates for the right-inverse of $B$, starting from the approaches described in Sections \ref{sub: mp-inverse} and \ref{sub: spanning tree}. For example, assume that $S_{I}^{1},\dots,S_{I}^{N_{t}}$ are $N_{t}$ right-inverses of $B$ obtained by relying on $N_{t}$ different spanning trees, as in \Cref{sub: spanning tree}. Then, the ``\emph{average tree solver}''
 \begin{align}
    \label{eq:many trees}
     S_{I}^{\text{avg}}:=\frac{1}{N_{t}}\sum_{i=1}^{N_{t}}S_{I}^{i},
 \end{align}
 also operates as a right-inverse of $B$, thus fulfilling \ref{ass: S_I}. In practice, as we shall discuss in \Cref{sec: Numerical results}, combining multiple trees as in \eqref{eq:many trees} can substantially improve the quality of the model, especially in terms of the approximation of the pressure field. 
\end{remark}

\section{An operator that maps onto the kernel of \texorpdfstring{$B$}{B}}
\label{sec: S0}

The second component of our approach concerns an operator that can be used to compute the homogeneous solution $\bq_0 \in \bQ_0$. We propose three examples for this operator in the following subsections, and remark in each case on their surjectivity.

\begin{figure}
	\centering
	\includegraphics[width=0.5\textwidth]{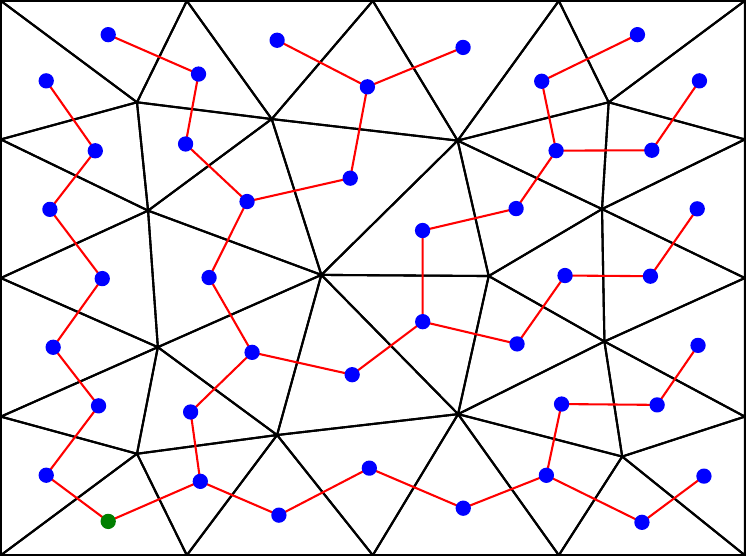}\vspace{0.25cm}
	\caption{A spanning tree $\cT$ superimposed on a triangular, unstructured grid. A particular solution $\bq_f$ is generated that balances the mass source $f$ by progressively solving the conservation equation from the leaves to the root of the tree at the domain boundary, illustrated in green in the bottom left of the figure. Only the degrees of freedom on faces cut by the tree are taken into account.}
	\label{fig:spanningtree}
\end{figure}

\subsection{A choice based on the right-inverse}

We first consider a construction that follows directly from a given operator $S_I$ that satisfies assumption \ref{ass: S_I}. Due to its simplicity, we present its definition together with its admissibility in the next proposition.

\begin{proposition} \label{prop: S0 as I-SB}
	If $S_I$ is chosen according to \ref{ass: S_I}, then the choice
	\begin{subequations}
	\begin{align}
		\bR &:= \bQ, &
		S_0 &:= I - S_I B, 
	\end{align}
	satisfies \ref{ass: S_0}. Moreover, $S_0$ is a projection and
	\begin{align} \label{eq: ker S_0}
		\Ker(S_0) &= \Ran(S_I B).
	\end{align}
	\end{subequations}
\end{proposition}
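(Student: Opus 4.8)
The plan is to verify the three claims in order: first that $S_0 := I - S_I B$ maps into $\bQ_0 = \Ker(B)$ and thus satisfies \ref{ass: S_0}, second that $S_0$ is a projection (idempotent), and third the kernel identity \eqref{eq: ker S_0}. All three follow from the single defining relation $B S_I = I$ granted by \ref{ass: S_I}, so the proof should be short and purely algebraic; the only subtlety is bookkeeping about which identity operator lives on which space.

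\emph{Mapping onto the kernel.} First I would compute $B S_0$ directly. Applying $B$ on the left and using $B S_I = I$ (the identity on $P'$),
\begin{equation}
    B S_0 = B(I - S_I B) = B - (B S_I) B = B - B = 0,
\end{equation}
so $\Ran(S_0) \subseteq \Ker(B) = \bQ_0$. Since $\bR := \bQ$, the map $S_0 : \bR \to \bQ_0$ is well-defined and linear, which is exactly what \ref{ass: S_0} demands.

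\emph{Projection property.} Next I would check idempotency by composing $S_0$ with itself. The key cancellation is that $S_I B S_I = S_I (B S_I) = S_I$, again by \ref{ass: S_I}. Hence
\begin{equation}
    S_0^2 = (I - S_I B)(I - S_I B) = I - 2 S_I B + S_I (B S_I) B = I - 2 S_I B + S_I B = I - S_I B = S_0,
\end{equation}
confirming that $S_0$ is a projection.

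\emph{The kernel identity.} Finally, for \eqref{eq: ker S_0} I would argue by double inclusion. For $\supseteq$: any element of $\Ran(S_I B)$ has the form $S_I B \bw$, and since $S_0$ is a projection complementary to $S_I B$ we get $S_0 (S_I B \bw) = (I - S_I B) S_I B \bw = S_I B \bw - S_I (B S_I) B \bw = S_I B \bw - S_I B \bw = 0$, so $\Ran(S_I B) \subseteq \Ker(S_0)$. For $\subseteq$: if $S_0 \bw = 0$ then $\bw = \bw - S_0 \bw = S_I B \bw \in \Ran(S_I B)$. I do not expect any genuine obstacle here; every step reduces to the one relation $B S_I = I$, and the main thing to be careful about is treating $I$ consistently as the identity on $\bQ$ versus on $P'$ in each occurrence. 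It may be worth remarking that surjectivity of $S_0$ onto $\bQ_0$ is automatic because $S_0$ restricts to the identity on $\bQ_0$ (for $\bq_0 \in \bQ_0$ one has $S_0 \bq_0 = \bq_0 - S_I B \bq_0 = \bq_0$), which both records that the construction meets the surjectivity concern raised at the start of \Cref{sec: S0} and shows $\Ran(S_0)$ equals all of $\bQ_0$.
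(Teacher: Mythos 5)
Your proposal is correct and follows essentially the same route as the paper: both verify \ref{ass: S_0} via $BS_0 = B - (BS_I)B = 0$ and establish \eqref{eq: ker S_0} by the same double inclusion using the cancellation $S_I(BS_I)B = S_I B$. The only cosmetic difference is the projection step, where you expand $S_0^2 = I - 2S_I B + S_I B = S_0$ directly, while the paper deduces idempotency from the observation that $S_0$ acts as the identity on $\bQ_0 \supseteq \Ran(S_0)$ --- a fact you also record, usefully, in your closing remark on surjectivity.
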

\begin{proof}
A direct calculation gives us $B S_0 = B(I - S_I B) = B - B = 0$, which implies that $\Ran(S_0) \subseteq \bQ_0$ verifying \ref{ass: S_0}. Next, we note that
\begin{align}
    S_0 \bq_0 &= (I - S_I B) \bq_0 = \bq_0, &
    \forall \bq_0 &\in \bQ_0.
\end{align}
Hence $S_0$ is the identity operator on $\bQ_0$ and is therefore a projection. To prove \eqref{eq: ker S_0}, we first note that
\begin{align}
    S_0 (S_I B) = S_I B - S_I (B S_I) B = 0
\end{align}
which shows that $\Ker(S_0) \supseteq \Ran(S_I B)$. Secondly, for $q \in \Ker(S_0)$, we have $(I - S_I B) q = 0$ 
$\implies$ $q = S_I B q$. In turn, $q \in \Ran(S_I B)$ from which we conclude $\Ker(S_0) \subseteq \Ran(S_I B)$.
\qed\end{proof}

\subsection{Proper orthogonal decomposition}
\label{sub: POD}

We now consider an alternative mapping based on techniques common to reduced basis methods. We assume to be in the setting of model problem \eqref{eq: model problem}, so that $\bQ=\mathbb{RT}_{0}$ and $\|\cdot\|_{\bQ}=\|\cdot\|_{H(\vdiv)}$.
With this setup, we assume that $\{\mu_{i}\}_{i=1}^{N_{s}}$ is a sampling of the parameter space, i.e.\ $\mu_i \in M$ for each index $i$ with $N_s$ the number of samples.
Let $\{\bq^{\mu_{i}}_{0}\}_{i=1}^{N_{s}}$ be the homogeneous parts of the corresponding flux fields. 
For given $n \le N_Q$, we then construct the empirically optimal projector $V_n : \mathbb{R}^{n}\to\bQ_{0}$ that minimizes the reconstruction error:
\begin{align}
    \label{eq:podopt}
    V_n :&=\argmin_{W:\mathbb{R}^{n}\to\bQ_{0}}\;\frac{1}{N_{s}}\sum_{i=1}^{N_{s}}\|\bq^{\mu_{i}}_{0}-WW^{*}\bq^{\mu_{i}}_{0}\|_{\bQ}^{2}\nonumber\\
    &=\argmin_{W:\mathbb{R}^{n}\to\bQ_{0}}\;\frac{1}{N_{s}}\sum_{i=1}^{N_{s}}\|\bq^{\mu_{i}}_{0}-WW^{*}\bq^{\mu_{i}}_{0}\|_{L^{2}}^{2}.
\end{align}
The last equality follows from the fact that $\|\bq\|_{\bQ}=\|\bq\|_{L^{2}}$ for all $\bq\in\bQ_{0}.$
From a practical point of view, this is achieved by means of \emph{Proper Orthogonal Decomposition} (POD), see e.g. \cite[Sec. 6.3]{quarteroni2015reduced}. In particular, the solution to \eqref{eq:podopt} is known in closed form and can be computed as follows. First, let the so-called \emph{snapshots matrix} be defined as
\begin{align} \label{eq: def snapshot matrix}
    \mathsf{U}:=[\mathsf{q}^{\mu_{1}}_{0},\ldots,\mathsf{q}^{\mu_{N_{s}}}_{0}]^{\top}\in\mathbb{R}^{N_{s}\times N_Q}.
\end{align}
Second, let $\mathsf{M} \in \mathbb{R}^{N_Q\times N_Q}$ be the mass matrix representing the $L^{2}$-inner product over $\bQ$, 
that is, the matrix that satisfies
\begin{align}
    \mathsf{u}^\top \mathsf{M} \mathsf{v}
     = 
    \langle\bm{u}, \bm{v}\rangle_{L^{2}}
\end{align}
for all $\bm{u}, \bm{v}\in\bQ$ with vector representations $\mathsf{u}, \mathsf{v} \in \mathbb{R}^{N_Q}$. We then perform an eigenvalue decomposition to form matrices $\tilde{\mathsf{U}}, \mathsf{\Lambda} \in \mathbb{R}^{N_{s} \times N_{s}}$ that satisfy
\begin{align}
	\mathsf{U}\mathsf{M}\mathsf{U}^{\top}=\tilde{\mathsf{U}}\mathsf{\Lambda}\tilde{\mathsf{U}}^{\top},
\end{align}
with $\mathsf{\Lambda}=\text{diag}(\lambda_{1},\dots,\lambda_{N_{s}})$ containing the eigenvalues $\lambda_{1}\ge\dots\ge\lambda_{N_{s}}\ge0.$ Let now $\tilde{\mathsf{U}}_{n}\in\mathbb{R}^{n\times N_{s}}$ be the matrix obtained by extracting the first $n$ rows of $\tilde{\mathsf{U}}.$ Similarly, let \begin{align}
	\mathsf{\Lambda}_{n}^{-1/2}:=\text{diag}\left(\frac{1}{\sqrt{\lambda_{1}}},\dots,\frac{1}{\sqrt{\lambda_{n}}}\right).
\end{align}
Finally, the POD projector $V_n: \mathbb{R}^n \to \bQ_{0}$ is defined according to its matrix representation as
\begin{align} \label{eq: def POD projection}
	\mathsf{V_n}:= \left(\mathsf{\Lambda}^{-1/2}_{n}\tilde{\mathsf{U}}_{n}\mathsf{U} \right)^{\top}.
\end{align}

\begin{proposition} \label{prop: S0 POD}
    Let $n \le N_Q$ and let $V_{n}$ be as in \eqref{eq: def POD projection}. Then,
	\begin{align}
		\bR &:= \mathbb{R}^n, &
		S_0 &:= V_n, 
	\end{align}
	satisfies assumption \ref{ass: S_0}.
\end{proposition}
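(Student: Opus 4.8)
The plan is to check the single nontrivial requirement of \ref{ass: S_0}, namely that $B S_0 = 0$, since the linearity of $S_0 = V_n$ is immediate from its matrix definition \eqref{eq: def POD projection} and the identification $\bR = \mathbb{R}^n$ is given. Because $B$ annihilates every element of $\bQ_0 = \Ker(B, \bQ)$ by definition of the kernel, it suffices to establish the range inclusion $\Ran(V_n) \subseteq \bQ_0$; once this is shown, $B V_n \mathsf{x} = 0$ for all $\mathsf{x} \in \mathbb{R}^n$, and hence $B V_n = 0$.

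To prove the range inclusion, I would unwind the matrix representation by transposing \eqref{eq: def POD projection}, which gives $\mathsf{V_n} = \mathsf{U}^\top \tilde{\mathsf{U}}_n^\top \mathsf{\Lambda}_n^{-1/2}$, using that $\mathsf{\Lambda}_n^{-1/2}$ is diagonal. The key observation is that, by the definition of the snapshot matrix \eqref{eq: def snapshot matrix}, the columns of $\mathsf{U}^\top$ are exactly the snapshot vectors $\mathsf{q}^{\mu_1}_0, \dots, \mathsf{q}^{\mu_{N_s}}_0$. Consequently, for an arbitrary coefficient vector $\mathsf{x} \in \mathbb{R}^n$, setting $\mathsf{y} := \tilde{\mathsf{U}}_n^\top \mathsf{\Lambda}_n^{-1/2} \mathsf{x} \in \mathbb{R}^{N_s}$ yields $\mathsf{V_n} \mathsf{x} = \mathsf{U}^\top \mathsf{y} = \sum_{i=1}^{N_s} y_i \mathsf{q}^{\mu_i}_0$, exhibiting every output of $V_n$ as a linear combination of the snapshots.

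The final step is to recall that each snapshot $\mathsf{q}^{\mu_i}_0$ is, by construction, the vector representation of a homogeneous part $\bq^{\mu_i}_0 \in \bQ_0$. Since $\bQ_0$ is a linear subspace, the combination $\sum_i y_i \bq^{\mu_i}_0$ again belongs to $\bQ_0$, so $V_n \mathsf{x} \in \bQ_0$ for every $\mathsf{x}$. Applying $B$ and invoking $\bQ_0 = \Ker(B, \bQ)$ then delivers $B V_n = 0$, which is precisely \ref{ass: S_0}.

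I do not anticipate a genuine obstacle: the statement is essentially the standard fact that POD modes inherit any linear subspace property common to all snapshots, and the eigenvalue decomposition together with the normalization by $\mathsf{\Lambda}_n^{-1/2}$ serve only to make $V_n$ well defined rather than to affect its range. The single point deserving care is the transpose bookkeeping in \eqref{eq: def POD projection}, ensuring that it is the columns of $\mathsf{V_n}$ — and not its rows — that are identified as snapshot combinations.
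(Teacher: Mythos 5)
Your proposal is correct and follows exactly the same route as the paper's proof: the paper also argues that, by definitions \eqref{eq: def POD projection} and \eqref{eq: def snapshot matrix}, $\Ran(V_n)\subseteq\operatorname{span}\{\bq^{\mu_i}_0\}_{i=1}^{N_s}\subseteq\bQ_0$, hence \ref{ass: S_0} holds. You merely make the paper's one-line range inclusion explicit via the transpose bookkeeping $\mathsf{V_n}=\mathsf{U}^\top\tilde{\mathsf{U}}_n^\top\mathsf{\Lambda}_n^{-1/2}$, which is a correct and welcome elaboration rather than a different argument.
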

\begin{proof}
	By definitions \eqref{eq: def POD projection} and \eqref{eq: def snapshot matrix}, the range of $V_n$ is a subspace of $\operatorname{span} \{\bq^{\mu_{i}}_{0}\}_{i=1}^{N_{s}}$. 
    Consequently, $\Ran(V_n) \subseteq \bQ_0$.
\qed\end{proof}

It is important to note that this choice of $S_0$ is generally not surjective on the kernel $\bQ_0$. However, the range of $V_n$ 
can be a good approximation of $\bQ_0$ for sufficiently large $n$. 

\subsection{Differential complexes}
\label{sub: diff complex}
Our final example concerns the case in which $B$ corresponds to a differential operator. In particular, if a $d^k: \bQ \to P$ exists such that
\begin{align} \label{eq: def B dk}
	\langle B \bq, \tilde p \rangle = \langle d^k \bq, \tilde p \rangle_P
\end{align}
and $d^k$ is an operator in a \emph{differential complex}. Recall that a differential complex is a sequence of spaces $\Lambda^k$ connected by operators $d^k : \Lambda^k \to \Lambda^{k + 1}$ such that $d^k d^{k - 1} = 0$ for all $k$. Identifying $P := \Lambda^{k + 1}$, $\bQ := \Lambda^k$, and $\bR := \Lambda^{k - 1}$, the differential complex is illustrated as
\begin{equation}
\begin{tikzcd} \label{eq: diff complex}
    \ldots \arrow[r, "d^{k-2}"] &
    \bR \arrow[r, "d^{k-1}"] &
    \bQ \arrow[r, "d^k"] &
    P \arrow[r, "d^{k + 1}"] &
    \ldots
\end{tikzcd}
\end{equation}
The following result is directly obtained from the definitions.

\begin{proposition} \label{prop: S0 diffcomp} 
	Let $d^k$ be such that \eqref{eq: def B dk} holds and let \eqref{eq: diff complex} be a differential complex with $\bQ = \Lambda^k$. Then the choice
	\begin{align}
		\bR &:= \Lambda^{k - 1}, &
		S_0 &:= d^{k - 1},
	\end{align}
	satisfies assumption \ref{ass: S_0}.
\end{proposition}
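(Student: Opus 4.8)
The plan is to verify the two defining requirements of assumption \ref{ass: S_0} directly, namely that $S_0 := d^{k-1}$ is a linear map and that it maps $\bR$ into $\bQ_0$, which is equivalent to the identity $B S_0 = 0$. Linearity comes for free: by hypothesis $d^{k-1}$ is one of the differentials of the complex \eqref{eq: diff complex}, and these are linear operators by definition. So I would set $\bR := \Lambda^{k-1}$ and reduce the whole task to establishing the range containment $\Ran(d^{k-1}) \subseteq \Ker(B) = \bQ_0$.

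First I would fix an arbitrary potential $\br \in \bR = \Lambda^{k-1}$ and examine the functional $B d^{k-1} \br \in P'$ by testing it against an arbitrary $\tilde p \in P$. Applying the defining identity \eqref{eq: def B dk} with the choice $\bq = d^{k-1} \br$ rewrites the duality pairing $\langle B d^{k-1}\br, \tilde p \rangle$ as the $P$-inner product $\langle d^k d^{k-1} \br, \tilde p \rangle_P$. The complex property $d^k d^{k-1} = 0$ collapses the argument of this inner product to zero, so the pairing vanishes. Since $\tilde p \in P$ was arbitrary, the functional $B d^{k-1}\br$ is identically zero in $P'$, which means $d^{k-1}\br \in \bQ_0$; and since $\br$ was arbitrary, this yields $\Ran(S_0) \subseteq \bQ_0$, equivalently $B S_0 = 0$, as required.

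I do not anticipate any genuine obstacle: the proof is a single substitution into \eqref{eq: def B dk} followed by one use of $d^k d^{k-1} = 0$, with no analytic subtlety, matching the paper's claim that the result follows directly from the definitions. The only point deserving a word of care is the passage from ``the pairing vanishes for every $\tilde p$'' to ``the functional is the zero element of $P'$'', but this is simply the definition of the zero functional and requires no density or completeness argument. For the promised remark on surjectivity, I would note that the containment $\Ran(d^{k-1}) \subseteq \bQ_0 = \Ker(d^k)$ is an equality precisely when the complex \eqref{eq: diff complex} is exact at $\Lambda^k$; in that case $S_0 = d^{k-1}$ is surjective onto $\bQ_0$, whereas in the presence of nontrivial cohomology at position $k$ the map captures only the exact part of the kernel.
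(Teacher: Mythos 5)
Your proof is correct and is exactly the direct verification the paper has in mind when it says the result ``is directly obtained from the definitions'': substitute $\bq = d^{k-1}\br$ into \eqref{eq: def B dk} and invoke $d^k d^{k-1} = 0$. Your closing remark on surjectivity and cohomology also matches the paper's own discussion following \Cref{prop: S0 diffcomp}.
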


In our model problem \eqref{eq: model problem}, we have the divergence operator $d^k \bq = \div \bq$ and $\bQ = \mathbb{RT}_0$. In the case in which $\Omega$ is three-dimensional, we can use the theory of finite element exterior calculus \cite{arnold2006finite,arnold2018finite} and consider the following differential complex:
\begin{align}
	\mathbb{N}_0(\Omega_h) \xrightarrow{\curl} \mathbb{RT}_0(\Omega_h) \xrightarrow{\div} \mathbb{P}_0(\Omega_h).
\end{align}
in which $\mathbb{N}_0$ is the N\'ed\'elec edge element of the first kind \cite{nedelec1980mixed}.
We emphasize that this is a differential complex due to the vector calculus identity $\div \curl \br = 0$ for all sufficiently regular $\br$.
An analogous differential complex exists in the two-dimensional case, in which the nodal Lagrange space $\mathbb{L}_1(\Omega_h)$ replaces $\mathbb{N}_0(\Omega_h)$ and we have the rotated gradient $\nabla^\perp r :=  [\partial_2 r,\ - \partial_1 r]^\top$ as the curl.

We remark that $S_0$ as chosen in \Cref{prop: S0 diffcomp} is surjective onto $\bQ_0$ if the domain is contractible. Otherwise, e.g.\ if the domain has holes, the homogeneous solution may not be fully computable in this way. Extra steps then need to be taken to find the part of the solution that is in $\Ker(d^k) / \Ran(d^{k - 1})$, also known as the cohomology space. In this work, we focus on the simplified case with zero cohomology.

\section{A neural network model that maps onto the potential space} 
\label{sec: Neural networks}

The third and final component of our approach consists of an efficient neural network $\cN : M \to \bR$ that can produce a suitable potential $\br=\br^\mu$ for any given parameter instance $\mu$ (cf. assumption \ref{ass: N}). 
We recall that the method proposed in \Cref{sec:theory} constructs an approximation $(\tilde \bq, \tilde p)$ for a given $\mu$ in the following manner:
\begin{align}
\label{eq:tilde}
	\tilde \bq &= S_I f^\mu + S_0 \cN(\mu), &
	\tilde p &= S_I^* (A_{\tilde \bq} \tilde \bq - g^\mu).
\end{align}

Since we have already defined the linear operators $S_0$ and $S_I$ at this stage, we can directly train the neural network such that the map $\mu \mapsto (\tilde \bq, \tilde p)$ approximates the solution map $\mu \mapsto (\bq^\mu, p^\mu)$. To this end, we note that the following holds true.

\begin{lemma}
    \label{lemma: pressure}
    Assume that: 
    \begin{enumerate}[label = \roman*)]
    \item The map $\cA : \bq\mapsto A_{\bq}$ is locally Lipschitz continuous.
    \item The solution map $\cQ : \mu\mapsto\bq^{\mu}$ is continuous.
    \item The source map $F : \mu\mapsto f^{\mu}$ is continuous. 
    \end{enumerate}
    Then, there exists a constant $C=C(\cA,\cQ,F,S_{I},S_{0},\cN)>0$ such that, for all $\mu\in M$ one has
    \begin{align}
    \|p - \tilde p\|_{P}\le C\|\bq-\tilde\bq\|_{\bQ},
    \end{align}
    where $\bq=\bq^{\mu}$ and $p=p^{\mu}$, while $\tilde{\bq}$ and $\tilde{p}$ are as in \eqref{eq:tilde}. 
\end{lemma}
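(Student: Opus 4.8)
The plan is to estimate $\|p - \tilde p\|_P$ by expressing both pressures through the right-inverse $S_I$ and its adjoint, and then exploiting the Lipschitz structure of $\cA$ together with the boundedness of the relevant operators over a compact parameter range. First I would recall that the true solution satisfies $A_{\bq}\bq - B^*p = \bg$ in $\bQ'$ by \eqref{eq: general constitutive law}, so applying $S_I^*$ and using $S_I^* B^* = (B S_I)^* = I^* = I$ gives the exact identity $p = S_I^*(A_{\bq}\bq - \bg)$, which mirrors the postprocessing formula for $\tilde p$ in \eqref{eq:tilde}. Subtracting the two expressions yields
\begin{align}
    p - \tilde p = S_I^*\bigl(A_{\bq}\bq - A_{\tilde\bq}\tilde\bq\bigr),
\end{align}
so that $\|p - \tilde p\|_P \le \|S_I^*\|\,\|A_{\bq}\bq - A_{\tilde\bq}\tilde\bq\|_{\bQ'}$, and the whole estimate reduces to controlling the difference $A_{\bq}\bq - A_{\tilde\bq}\tilde\bq$ in terms of $\|\bq - \tilde\bq\|_{\bQ}$.

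Next I would split this difference using the triangle inequality into a term measuring the variation of the operator and a term measuring the variation of the argument:
\begin{align}
    \|A_{\bq}\bq - A_{\tilde\bq}\tilde\bq\|_{\bQ'}
    \le \|A_{\bq}\bq - A_{\tilde\bq}\bq\|_{\bQ'}
      + \|A_{\tilde\bq}\bq - A_{\tilde\bq}\tilde\bq\|_{\bQ'}.
\end{align}
The first term is bounded by the local Lipschitz continuity of the map $\cA : \bq \mapsto A_{\bq}$ (assumption i)), which controls $\|A_{\bq} - A_{\tilde\bq}\|$ in the appropriate operator norm by $\|\bq - \tilde\bq\|_{\bQ}$, multiplied by $\|\bq\|_{\bQ}$; the second term is bounded by the operator norm of $A_{\tilde\bq}$ times $\|\bq - \tilde\bq\|_{\bQ}$. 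Both therefore scale linearly in $\|\bq - \tilde\bq\|_{\bQ}$, provided I can bound the quantities $\|\bq\|_{\bQ}$, $\|A_{\tilde\bq}\|$, and the relevant Lipschitz constants uniformly in $\mu$.

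The main obstacle is precisely this uniformity: local Lipschitz continuity and the stated continuity assumptions (ii) and iii)) only give control over compact sets, so I must argue that, as $\mu$ ranges over $M$, the solutions $\bq^\mu$ and the approximations $\tilde\bq = S_I f^\mu + S_0\cN(\mu)$ both stay in a fixed bounded (hence, if $M$ is compact, compact) region of $\bQ$. For $\bq^\mu$ this follows from the continuity of $\cQ$ together with compactness of $M$; for $\tilde\bq$ it follows from continuity of $F$, of $\cN$, and boundedness of the linear operators $S_I$ and $S_0$. On this compact set the local Lipschitz constant of $\cA$ and the operator norms of the $A_{\tilde\bq}$ are uniformly bounded, and $\|\bq\|_{\bQ}$ is uniformly bounded as well. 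Collecting these bounds into a single constant $C = C(\cA, \cQ, F, S_I, S_0, \cN)$ and absorbing $\|S_I^*\|$ gives the claimed inequality. I would expect the write-up to hinge on making the compactness-of-$M$ assumption explicit (it is implicit in the continuity hypotheses being useful) and on carefully identifying which operator norm the Lipschitz condition on $\cA$ is stated in, so that the bound on the first split term is dimensionally consistent.
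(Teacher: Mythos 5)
Your proposal is correct and follows essentially the same route as the paper's proof: the same postprocessing identity $p-\tilde p = S_I^*(A_{\bq}\bq - A_{\tilde\bq}\tilde\bq)$ (via $S_I^*B^* = (BS_I)^* = I$), the same triangle-inequality splitting (you insert the intermediate term $A_{\tilde\bq}\bq$ where the paper inserts $A_{\bq}\tilde\bq$, an immaterial mirror image), and the same compactness argument to turn the local Lipschitz and continuity hypotheses into uniform bounds in $\mu$. If anything, you are slightly more careful than the paper, which defines the Lipschitz constant $L_A$ only over the solution manifold $\bQ_{M}=\{\bq^{\mu}\}$ even though the estimate compares $\bq$ with $\tilde\bq\notin\bQ_{M}$ in general --- your requirement of a compact set containing both $\{\bq^{\mu}\}$ and $\{\tilde\bq^{\mu}\}$ repairs this, and your observation that compactness of $M$ is used tacitly is likewise accurate.
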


\begin{proof}
    Let $\bQ_{M}:=\{\bq^{\mu'}\;|\;\mu'\in M\}\subset\bQ$ be the solution manifold associated with the flux component. We note that, by continuity of the solution map, $\bQ_{M}$ is compact. Since locally Lipschitz continuous maps are Lipschitz over compact sets, we may denote the Lipschitz constant of $\cA$ over $\bQ_{M}$ by $L_A$. 

    We have,
    \begin{align*}
    \|p - \tilde p\|_{P} =
    \| S_{I}^{*} (A_{\bq}\bq - A_{\tilde \bq}\tilde{\bq})\|_{P} &\le
    \| S_{I} \|\ \|A_{\bq}\bq - A_{\tilde \bq}\tilde{\bq}\|_{\bQ'} \\
    &\le
    \| S_{I} \| \left(\|A_{\bq}\bq - A_{\bq}\tilde{\bq}\|_{\bQ'} + \|A_{\bq}\tilde{\bq} - A_{\tilde \bq}\tilde{\bq}\|_{\bQ'}\right)\\
    &\le\| S_{I} \| \left(\|A_{\bq}\|\ \|\bq - \tilde{\bq}\|_{\bQ} + \|A_{\bq} - A_{\tilde \bq}\|\ \|\tilde{\bq}\|_{\bQ}\right)\\
    &\le\| S_{I} \| \left(\|A_{\bq}\|\ \|\bq - \tilde{\bq}\|_{\bQ} +  L_A \|\bq-\tilde \bq\|\ \|\tilde{\bq}\|_{\bQ}\right)
    ,\end{align*}
    in which $\| S_I \|$ and $\| A_{\bq} \|$ refer to the operator norms.

    We now note that, since the maps $S_{I},F,S_{0},\mathcal{N}$ are continuous, so is the map $\mu\mapsto S_{I}f^{\mu}+S_{0}\cN(\mu)$. The compactness of $M$ then implies that some $c'>0$ exists such that $\|\tilde\bq\|_{\bQ}\le c'$, regardless of $\mu\in M$. Similarly, we can bound $\|A_{\bq}\|$ by some $c''>0$. It follows that
    \begin{align}
        \|p - \tilde p\|_{P}\le \| S_{I} \| (c''+Lc')\|\bq-\tilde\bq\|_{\bQ},
    \end{align}
    as claimed.
\qed\end{proof}

In other words, Lemma \ref{lemma: pressure} shows that it suffices to train the potential network $\cN$ to produce a reliable approximation of the flux, without directly taking care of the pressure. This allows us to avoid reassembling $A_{\tilde\bq}$ during the training phase.
In light of these considerations, one possibility is to directly train the neural network model $\cN$ by minimizing the following loss function
\newcommand{\loss}{\mathscr{L}}
\begin{align}
    \label{eq:loss1}
    \loss(\cN)=\frac{1}{N_{s}}\sum_{i=1}^{N_{s}}\left\|\bq^{\mu_{i}}-\left[S_I f^\mu_{i} + S_0 \cN (\mu_{i})\right]\right\|_{\bQ}^{2}&=\frac{1}{N_{s}}\sum_{i=1}^{N_{s}}\left\|(I-S_{I}B)\bq^{\mu_{i}} - S_0 \cN (\mu_{i})\right\|_{\bQ}^{2} \nonumber\\
    &=\frac{1}{N_{s}}\sum_{i=1}^{N_{s}}\left\|\bq_{0}^{\mu_{i}} - S_0 \cN (\mu_{i})\right\|_{\bQ}^{2},
\end{align}
where $\bq_{0}^\mu:=(I-S_{I}B)\bq^{\mu}$, in line with our adopted notation. Here, $\{\mu_{i}, \bq^{\mu_{i}}\}_{i=1}^{N_{s}}\subset M\times\bQ$ is a suitable collection of training data, previously generated via the FOM. 

In practice, optimizing \eqref{eq:loss1} can become problematic if the potential space $\bR$ is high dimensional, as the network is required to produce a large number of outputs.
In that case, one may enhance the training by introducing a \emph{latent regularization}. This idea was first proposed by Fresca et al. in the so-called DL-ROM approach \cite{fresca2021comprehensive}, where the network architecture is decomposed in two components, $\phi:M\to\mathbb{R}^{n}$ and $\Psi:\mathbb{R}^{n}\to\bR$, so that $\cN = \Psi\circ\phi$. This splitting emphasizes the existence of a latent space, here denoted by $\mathbb{R}^{n}$, which lives in between the several hidden states of the overall architecture $\cN$. 
Then, the DL-ROM approach proposes to replace \eqref{eq:loss1} with
\newcommand{\reg}{\textnormal{reg}}
\begin{align}
    \label{eq:loss2}
    \loss_{\reg}(\phi,\Psi,\Psi')=\frac{1}{N_{s}}\sum_{i=1}^{N_{s}}\left\|\bq_{0}^{\mu_{i}} - S_0 (\Psi\circ\phi) (\mu_{i})\right\|_{\bQ}^{2} + \lambda\frac{1}{N_{s}}\sum_{i=1}^{N_{s}}\left\|\Psi'(\bq_{0}^{\mu_{i}}) -\phi(\mu_{i})\right\|_{\mathbb{R}^{n}}^{2},
\end{align}
where $\lambda>0$ is a suitable regularization parameter, while $\Psi':\bQ\to\mathbb{R}^{n}$ is an auxiliary network that is used to enforce a connection between the final output and the latent representation of the model \cite{franco2023latent}. In summary, this approach requires the training of three networks: $\phi$, $\Psi$ and $\Psi'$.

\begin{remark}
    In the DL-ROM literature, e.g. \cite{fresca2021comprehensive,franco2023deep}, the maps $\Psi'$ and $\Psi$ are usually referred to as \emph{encoder} and \emph{decoder}. This is because the same idea can be explained using the concept of \emph{autoencoders}. To see this, note that the regularization term makes the networks act so that $\Psi'(\bq_{0}^\mu)\approx\phi(\mu)$ and $\bq_{0}^{\mu}\approx S_{0}\Psi(\phi(\mu))$. It then follows that
    \begin{equation}
        \label{eq:reconstruction}
        \bq^{\mu}_{0}\approx S_{0}(\Psi(\Psi'(\bq_{0}^{\mu}))),
    \end{equation}
    meaning that the map $S_{0}\circ\Psi\circ\Psi'$ can operate as an autoencoder, effectively compressing and reconstructing the homogenous part of flux solutions (this is why we chose to represent the latent dimension with the letter $n$, to highlight its parallelism with the reduced dimension in the POD approach, cf. Section \ref{sub: POD}). Here, however, we consider the DL-ROM approach as a regularization strategy for $\cN:=\Psi\circ\phi$, as talking about autoencoders may generate confusion between the flux space $\bQ$ and the potential space $R$. Furthermore, for our purposes, we are not interested in the quality of the reconstruction error \eqref{eq:reconstruction}.
\end{remark}

\section{Putting things together: neural network methods respecting linear constraints}
\label{sec: approaches}


We are now ready to combine the three ingredients and present the overall workflow of our approach. Based on the strategies described in the previous sections, we shall derive three implementations, each with their own benefits and limitations. For each, we choose the operator $S_{I}$ to be the spanning tree solve from \Cref{sub: spanning tree}, or an averaged variation (cf. Remark \ref{remark: averaged-tree}). Thus, the main differences lie in the choice of $S_{0}$, cf. \Cref{sec: S0}, and in the corresponding neural network architectures and training strategies.
We highlight these in more detail in the following subsections.





\begin{figure}[tb]
	\centering
	\includegraphics[width=0.9\textwidth]{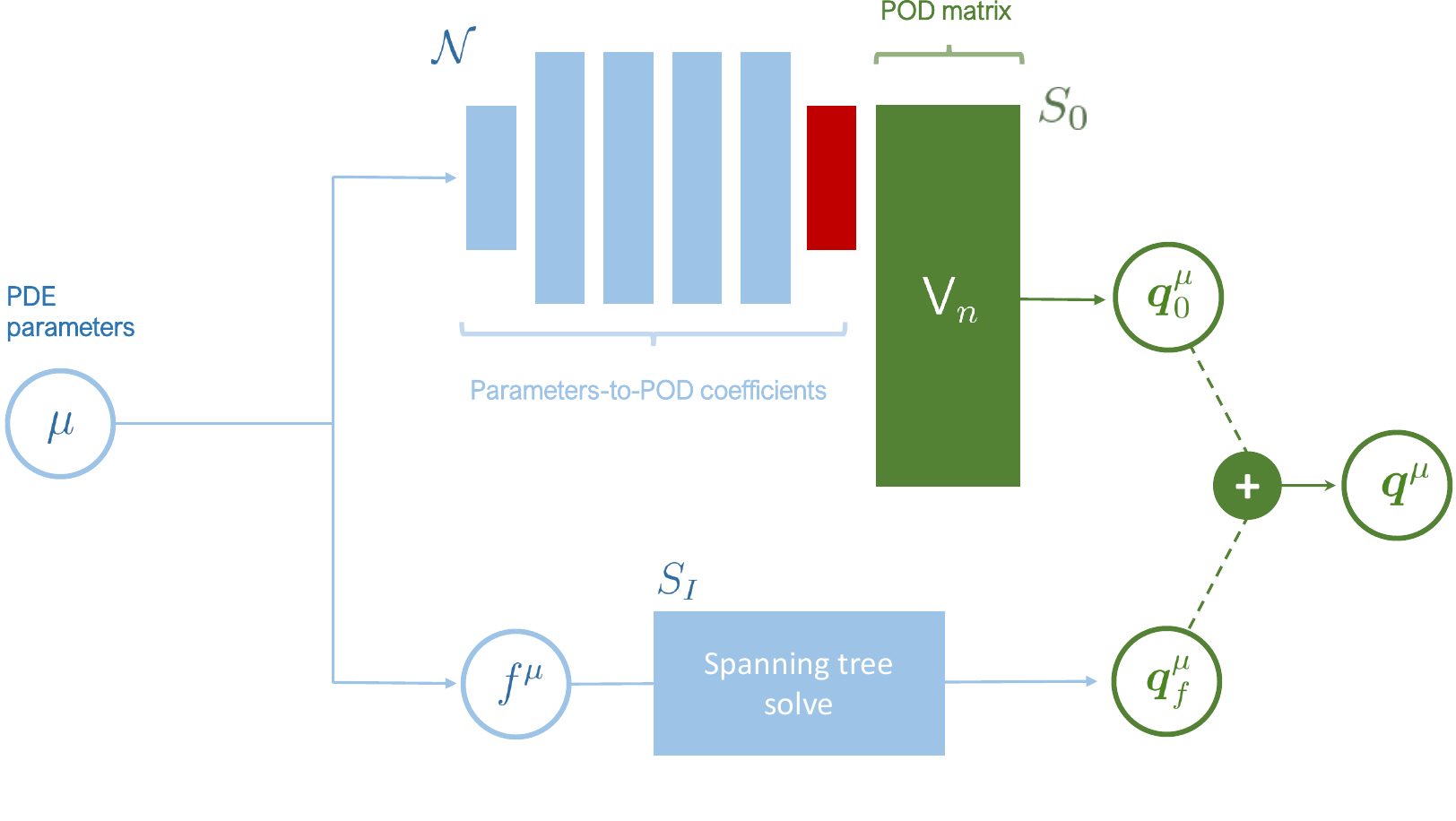}
	\caption{Visual representation of the \textit{Conservative POD-NN} approach, \Cref{sub: PODNN}. In red, the potential space representation (here coinciding with the POD latent space).}
	\label{fig:podnn}
\end{figure}

\subsection{Conservative POD-NN}
\label{sub: PODNN}
Our first proposal is to set $S_{0}:=V_{n}$, i.e. as the POD projector from \Cref{sub: POD}. The POD matrix is constructed starting from the homogeneous part of the flux, thus relying on the snapshots $\{\bq_{0}^{\mu_{i}}\}_{i=1}^{N_{s}}$. This choice is particularly suited if the homogeneous part of the solution manifold, namely
\begin{align}
    \mathscr{S}_{0}:=\{(I-S_{I}B)\bq^{\mu}\;|\;\mu\in M\}\subset\bQ_{0},
\end{align}
exhibits a fast decay of the Kolmogorov $n$-width (equivalently: it can be easily approximated by linear subspaces of reasonably small dimension). In practice, this can be deduced by looking at the decay of the singular values of the snapshot matrix.

In this case, the potential network, $\cN:M\to\mathbb{R}^{n}$, can be implemented as any classical deep feedforward network, and no particular training strategy is required. Recall that $n$ is the number of modes in the reduced basis obtained by POD. Additionally, thanks to the orthonormality of $\mathsf{V_n}$, the minimization of the loss function in \eqref{eq:loss1} can be equivalently replaced with that of
\begin{align}
    \label{eq:loss_POD}
    \loss_{\text{POD}}(\cN)=\frac{1}{N_{s}}\sum_{i=1}^{N_{s}}\|V_{n}^{*}\bq_{0}^{\mu_{i}}-\cN(\mu_{i})\|_{\mathbb{R}^{n}}^{2},
\end{align}
which is computationally more efficient to deal with. This fact is briefly summarized below.

\begin{lemma}
    Let $\mathscr{N}$ be any class of neural network architectures from $M$ to $\mathbb{R}^{n}$. Then,
    \begin{align}
        \argmin_{\cN\in\mathscr{N}}\;\loss(\cN)=\argmin_{\cN\in\mathscr{N}}\;\loss_{\text{POD}}(\cN).
    \end{align}
\end{lemma}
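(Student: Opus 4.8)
The plan is to show that $\loss$ and $\loss_{\text{POD}}$ differ only by an additive term that is independent of $\cN$, so that they are minimized by exactly the same networks over any class $\mathscr{N}$. The mechanism is the orthonormality of the POD projector, combined with an orthogonal (Pythagorean) splitting of each summand.

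First I would record the key property that makes everything work: the projector $V_n$ from \eqref{eq: def POD projection} is orthonormal with respect to the $L^2$-inner product, i.e. $V_n^* V_n = I$ on $\mathbb{R}^n$, where $V_n^*$ denotes the adjoint taken with respect to $\langle\cdot,\cdot\rangle_{L^2}$ on $\bQ$ and the Euclidean inner product on $\mathbb{R}^n$ (in matrix terms $\mathsf{V_n}^* = \mathsf{V_n}^\top \mathsf{M}$). This follows from the eigendecomposition $\mathsf{U}\mathsf{M}\mathsf{U}^\top = \tilde{\mathsf{U}}\mathsf{\Lambda}\tilde{\mathsf{U}}^\top$: substituting \eqref{eq: def POD projection} yields $\mathsf{V_n}^\top \mathsf{M}\mathsf{V_n} = \mathsf{\Lambda}_n^{-1/2}\tilde{\mathsf{U}}_n (\mathsf{U}\mathsf{M}\mathsf{U}^\top)\tilde{\mathsf{U}}_n^\top \mathsf{\Lambda}_n^{-1/2}$, and the orthonormality of the eigenvectors collapses this to $\mathsf{\Lambda}_n^{-1/2}\mathsf{\Lambda}_n\mathsf{\Lambda}_n^{-1/2} = I$. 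As an immediate consequence, $V_n V_n^*$ is the orthogonal projection onto $\Ran(V_n)$.

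Next, for each sample $\mu_i$ I would insert $\pm\, V_n V_n^* \bq_0^{\mu_i}$ into the corresponding summand of $\loss$ and split:
\begin{align*}
\|\bq_0^{\mu_i} - V_n\cN(\mu_i)\|_\bQ^2 = \|(I - V_nV_n^*)\bq_0^{\mu_i}\|_\bQ^2 + \|V_nV_n^*\bq_0^{\mu_i} - V_n\cN(\mu_i)\|_\bQ^2,
\end{align*}
where the cross term vanishes because the first vector lies in $\Ran(V_n)^\perp$ while the second lies in $\Ran(V_n)$. The first term is independent of $\cN$. For the second, factoring out $V_n$ and using the isometry property $V_n^*V_n = I$ gives $\|V_nV_n^*\bq_0^{\mu_i} - V_n\cN(\mu_i)\|_\bQ^2 = \|V_n^*\bq_0^{\mu_i} - \cN(\mu_i)\|_{\mathbb{R}^n}^2$. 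Recalling that $\|\cdot\|_\bQ = \|\cdot\|_{L^2}$ on $\bQ_0$, as already used in \eqref{eq:podopt}, keeps every norm consistent throughout.

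Summing over $i$ and dividing by $N_s$ then produces $\loss(\cN) = c + \loss_{\text{POD}}(\cN)$, with $c := \tfrac{1}{N_s}\sum_{i=1}^{N_s} \|(I - V_nV_n^*)\bq_0^{\mu_i}\|_\bQ^2$ a constant that does not depend on $\cN$. Since adding a fixed constant does not alter the set of minimizers over $\mathscr{N}$, the two $\argmin$ sets coincide, which is the claim. The only genuinely delicate point is the bookkeeping in the splitting step: one must ensure that $V_n^*$ is the adjoint with respect to the $L^2$-inner product, so that $V_nV_n^*$ is a true orthogonal projection and $V_n^*V_n = I$ holds, rather than treating $V_n^*$ as the plain Euclidean transpose. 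Once this distinction is handled, the Pythagorean identity and the isometry collapse everything cleanly.
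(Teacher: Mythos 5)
Your proposal is correct and follows essentially the same route as the paper's proof: the orthogonal (Pythagorean) splitting of each summand via $V_nV_n^*\bq_0^{\mu_i}$, followed by the isometry $\|V_n\mathsf{v}\|_{\bQ}=\|\mathsf{v}\|_{\mathbb{R}^n}$, so that $\loss$ and $\loss_{\text{POD}}$ differ by an additive constant independent of $\cN$. The only difference is that you explicitly verify the $L^2$-orthonormality $\mathsf{V_n}^\top\mathsf{M}\mathsf{V_n}=I$ from the eigendecomposition and flag the mass-matrix-weighted adjoint, which the paper simply takes for granted; this is a welcome clarification rather than a different argument.
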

\begin{proof}
   Due to the orthogonality of projection residuals, one has
\begin{align*}
    \|\bq_{0}^{\mu_{i}}-V_{n}\cN(\mu_{i})\|_{\bQ}^{2}
    &=
    \|\bq_{0}^{\mu_{i}}-V_{n}V_{n}^{*}\bq_{0}^{\mu_{i}}\|_{\bQ}^{2}+\|V_{n}V_{n}^{*}\bq_{0}^{\mu_{i}}-V_{n}\cN(\mu_{i})\|_{\bQ}^{2}\\
    &=
    \|\bq_{0}^{\mu_{i}}-V_{n}V_{n}^{*}\bq_{0}^{\mu_{i}}\|_{\bQ}^{2}+\|V_{n}^{*}\bq_{0}^{\mu_{i}}-\cN(\mu_{i})\|_{\mathbb{R}^{n}}^{2}.
\end{align*}
Since the first term in the last equality is independent of $\cN$, the conclusion follows.
\qed\end{proof}

We term this approach ``Conservative POD-NN'' because it forms a natural adaptation of the POD-NN approach introduced in \cite{hesthaven2018non}. The overall idea is illustrated in \Cref{fig:podnn} and summarized in Algorithm \ref{algo:PODNN}.

\newcommand{\ntrain}{N_{s}}
\begin{algorithm}[tb!]
\SetAlgoLined
\SetKwInOut{Input}{Input}
\SetKwInOut{Output}{Output}
\vspace{0.25em}
\Input{FOM solver $\texttt{FOM}=\texttt{FOM}(\mu)$, parameter space $M$, spanning-tree solver $S_{I}$, constraint operator $B$, source function $\texttt{source}$, basis dimension $n$, mass matrix $\mathsf{M}$, sample size $\ntrain$, DNN architecture class $\mathscr{N}$.
\vspace{0.25em}
}

\Output{Trained Conservative POD-NN model.
\vspace{0.75em}
}

$\mathsf{P},\;\mathsf{Q}^{0}=[\;], [\;]$\vspace{0.25em}

\For{\textnormal{$i = 1,\dots,\ntrain$}}{\vspace{0.25em}
$\mu\leftarrow$ random sample from $M$\vspace{0.25em}\\
$\bq\leftarrow \texttt{FOM}(\mu)$\vspace{0.25em}\\
$\bq_{0}\leftarrow (I-S_{I}B)\bq$\vspace{0.25em}\\
$\mathsf{P}\leftarrow \texttt{vstack}(\mathsf{P},\mu)$\vspace{0.25em}\\
$\mathsf{Q}^{0}\leftarrow\texttt{vstack}(\mathsf{Q_{0}},\bq_{0})$\vspace{0.25em}
}\vspace{0.25em}

$\mathsf{V}\leftarrow\texttt{POD}(\mathsf{Q}^{0},\mathsf{M},n)$\vspace{0.25em}\\
$\mathsf{C}\leftarrow\mathsf{Q}^{0}\mathsf{V}$\vspace{0.25em}\\

$\mathcal{N}_{*}\leftarrow\argmin_{\mathcal{N}\in\mathscr{N}}\;\frac{1}{\ntrain}\sum_{i=1}^{\ntrain}\|\mathsf{C}_{i,\cdot}-\mathcal{N}(\mathsf{P}_{i,\cdot})\|^{2}$\vspace{1em}\\

\textbf{def} $\texttt{ROM}=\texttt{ROM}(\mu)$:\\
\hspace{0.5cm} $f\leftarrow\texttt{source}(\mu)$\vspace{0.25em}\\
\hspace{0.5cm} $\bq_{f}\leftarrow S_{I}f$\vspace{0.25em}\\
\hspace{0.5cm} $\bq_{0}\leftarrow \mathsf{V}\mathcal{N}(\mu)$\vspace{0.25em}\\
\hspace{0.5cm}\textbf{return} $\bq_{0}+\bq_{f}$

\vspace{1em}
\Return{\emph{\texttt{ROM}}}\vspace{0.25em}

 \caption{\label{algo:PODNN}Training and implementation of the Conservative POD-NN, Section \ref{sub: PODNN}). Here, \texttt{vstack} denotes the vertical stacking of multiple arrays. \vspace{0.25em}}
 
\end{algorithm}

\subsection{Conservative DL-ROM: Curl-variation}
\label{sub: curlDLROM}
Our second proposal uses the construction in \Cref{sub: diff complex}, thus assuming the presence of a suitable differential complex underlying our problem of interest. For simplicity, we directly focus on the case of the model problem \eqref{eq: model problem} in three dimensions, in which $B$ is the divergence on $\bQ=\mathbb{RT}_0(\Omega_h)$.
We then let 
\begin{align}
    S_{0} &:= \curl, &
    \cN&:M\to\mathbb{N}_0(\Omega_h).
\end{align}

In this case the output of the neural network model is the N\'ed\'elec finite element space, which is high-dimensional. We therefore opt for the DL-ROM training strategy by splitting $\cN=\Psi\circ\phi$ and introducing the auxiliary network $\Psi'$. A visual representation is given in \Cref{fig:dlrom} and Algorithm \ref{algo:DLROM} presents a synthetic summary. Compared to the construction from \Cref{sub: PODNN}, this approach may be better suited when the solution manifold $\mathscr{S}_{0}$ exhibits a complicated structure that is not easily captured by linear subspaces. This situation is typically encountered if the PDE features a strong interaction between the space variable, $x$, and the model parameters, $\mu$: see, e.g., the discussion in \cite{franco2023deep} and the examples reported therein.

With little abuse of notation, we shall use the same terminology also for the 
2D-case, in which $\mathbb{N}_{0}(\Omega_{h})$ is replaced by the nodal space $\mathbb{L}_{1}(\Omega_{h})$, and $\curl$ by $\nabla^{\perp}$, cf. the discussion in \Cref{sub: diff complex}.

\begin{figure}[tb]
	\centering
	\includegraphics[width=\textwidth]{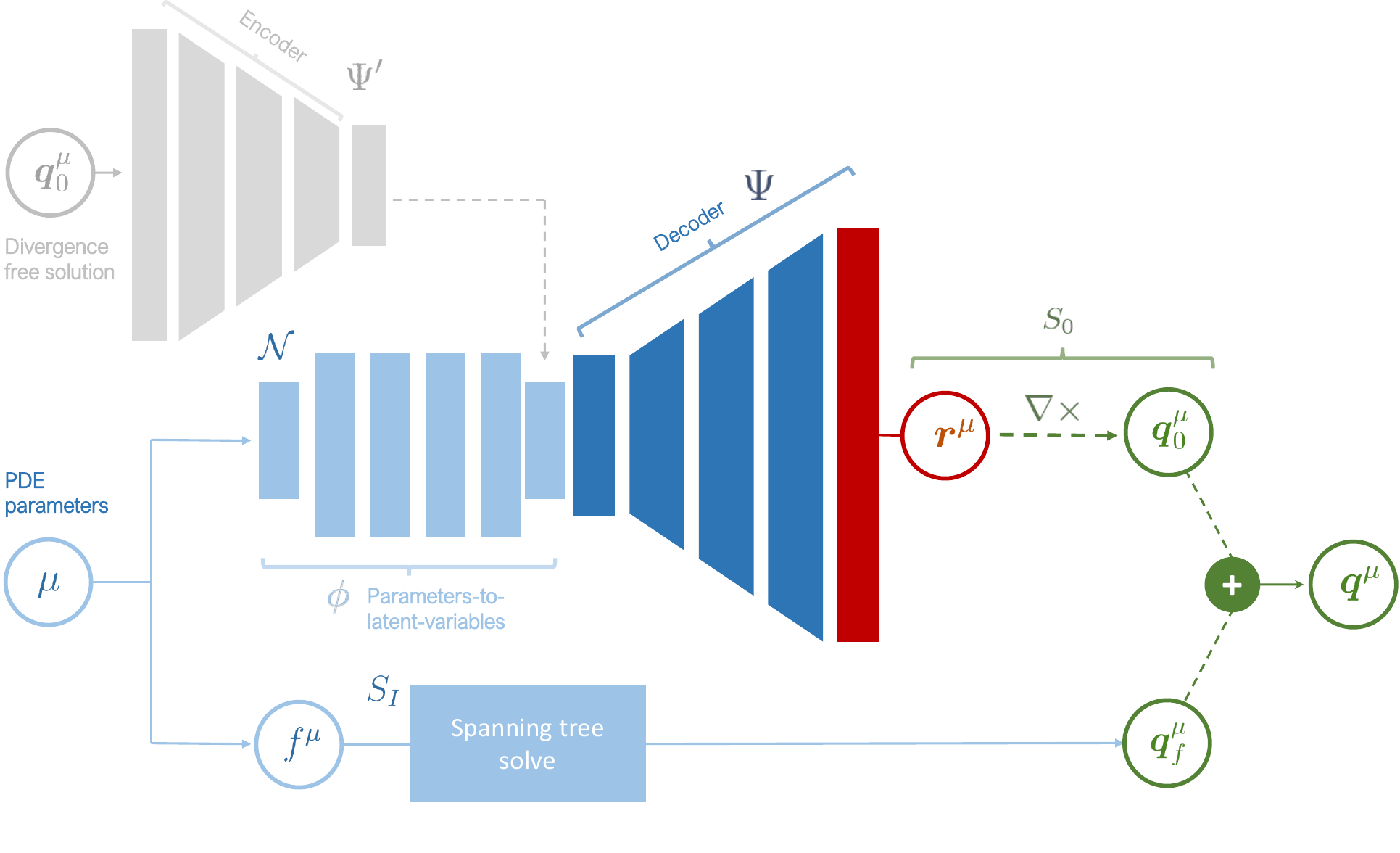}
	\caption{Visual representation of the \textit{Conservative DL-ROM} approach (curl variation), \Cref{sub: curlDLROM}. The potential network, $\cN$ is comprised of two sequential blocks, so that $\cN=\Psi\circ\phi$. An auxiliary encoder network (in gray) is used to introduce an internal regularization: the encoder is discarded after training. In red, the potential space representation.}
	\label{fig:dlrom}
\end{figure}

\begin{algorithm}[tb!]
\SetAlgoLined
\SetKwInOut{Input}{Input}
\SetKwInOut{Output}{Output}
\vspace{0.25em}
\Input{FOM solver $\texttt{FOM}=\texttt{FOM}(\mu)$, parameter space $M$, spanning-tree solver $S_{I}$, constraint operator $B$, source function $\texttt{source}$, mass matrix $\mathsf{M}$, sample size $\ntrain$, type $type$, dictionary of decoder architectures $\mathscr{D}=\mathscr{D}[type]$, class of encoder architectures $\mathscr{E}$, class of latent networks $\mathscr{N}$, regularization hyperparameter $\lambda$, curl matrix $\mathsf{Curl}$ (optional)
\vspace{0.25em}
}

\Output{Trained Conservative DL-ROM model.
\vspace{0.75em}
}

$\mathsf{P},\;\mathsf{Q}^{0}=[\;], [\;]$\vspace{0.25em}

\For{\textnormal{$i = 1,\dots,\ntrain$}}{\vspace{0.25em}
$\mu\leftarrow$ random sample from $M$\vspace{0.25em}\\
$\bq\leftarrow \texttt{FOM}(\mu)$\vspace{0.25em}\\
$\bq_{0}\leftarrow (I-S_{I}B)\bq$\vspace{0.25em}\\
$\mathsf{P}\leftarrow \texttt{vstack}(\mathsf{P},\mu)$\vspace{0.25em}\\
$\mathsf{Q}^{0}\leftarrow\texttt{vstack}(\mathsf{Q^{0}},\bq_{0})$\vspace{0.25em}
}\vspace{0.5em}

\textbf{if}$(type==$"curl"):\\
\hspace{0.5cm}$S_{0}\leftarrow \mathsf{Curl}$\\
\textbf{else}:\\
\hspace{0.5cm}$S_{0}\leftarrow I-S_{I}B$\vspace{1em}\\

$\phi_{*},\Psi_{*},\Psi'_{*}\leftarrow\argmin_{\phi,\Psi,\Psi'\in\mathscr{N}\times\mathscr{D}[type]\times\mathscr{E}}\;\frac{1}{\ntrain}\sum_{i=1}^{\ntrain}[\mathsf{Q}^{0}_{i,\cdot}-S_{0}\Psi(\phi(\mathsf{P}_{i,\cdot}))]^{T}\mathsf{M}[\mathsf{Q}^{0}_{i,\cdot}-S_{0}\Psi(\phi(\mathsf{P}_{i,\cdot}))]$\vspace{0.5em}\\
\hspace{6.5cm}$+\lambda\cdot\frac{1}{\ntrain}\sum_{i=1}^{\ntrain}\|\Psi'(\mathsf{Q}^{0}_{i,\cdot})-\phi(\mathsf{P}_{i,\cdot})\|^{2}$\vspace{1em}\\

$\mathcal{N}\leftarrow\Psi_{*}\circ\phi_{*}$\vspace{1em}\\

\textbf{def} $\texttt{DLROM}=\texttt{DLROM}(\mu)$:\\
\hspace{0.5cm} $f\leftarrow\texttt{source}(\mu)$\vspace{0.25em}\\
\hspace{0.5cm} $\bq_{f}\leftarrow S_{I}f$\vspace{0.25em}\\
\hspace{0.5cm} $\bq_{0}\leftarrow S_{0}\mathcal{N}(\mu)$\vspace{0.25em}\\
\hspace{1cm}\textbf{return} $\bq_{0}+\bq_{f}$

\vspace{1em}
\Return{\emph{\texttt{DLROM}}}\vspace{0.25em}

 \caption{\label{algo:DLROM}Training and implementation of the Conservative DL-ROMs, Sections \ref{sub: curlDLROM}-\ref{sub: sptROM}). \vspace{0.25em}}
 
\end{algorithm}

\subsection{Conservative DL-ROM: Spanning-Tree (SpT) variation}
\label{sub: sptROM}
Our last proposal is to set $S_{0}:=I-S_{I}B$ and, consequently, $\cN:M\to\bQ$. As in the previous case, to handle the high-dimensionality at output, we implement this approach within the DL-ROM framework. Thus, the overall workflow can be represented exactly as in Figure \ref{fig:dlrom}, up to replacing the curl operator with the new choice for $S_{0}$ (see also Algorithm \ref{algo:DLROM}.

Compared to the Conservative POD-NN, this approach has the same advantages as its Curl-counterpart. However, it also comes with increased flexibility as it does not require the existence of an underlying differential complex. It can therefore be readily applied to a larger class of problems (as soon as a suitable right-inverse $S_{I}$ is available).
However, this additional flexibility comes at a cost, which we pay in terms of network complexity. In general, the output of the network, i.e. the potential space $R$, is smaller in the Curl-variation than in the SpT-variation. This fact, whose nature is purely geometrical, is briefly summarized by the two lemmas below.

For 2-dimensional domains, the potential spaces for the curl and the spanning-tree variations are $\mathbb{L}_{1}(\Omega_{h})$ and $\mathbb{RT}_{0}(\Omega_{h})$, respectively. The degrees of freedom of the former correspond to mesh nodes, while those of the latter are represented by the mesh edges. The dimensions of these spaces are related as follows.

\begin{lemma}
    In a 2-dimensional setting, the simplicial tessellation $\Omega_{h}$ satisfies
    \begin{align}
        \label{eq:dim2d}
        \emph{nodes}(\Omega_{h})=\emph{edges}(\Omega_{h})-\left[\emph{elements}(\Omega_{h})-1\right].
    \end{align}
\end{lemma}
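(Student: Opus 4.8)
The plan is to recognize the claimed identity as a restatement of Euler's formula for planar simplicial complexes, valid precisely under the simply-connected (zero-cohomology) hypothesis already in force throughout this work. Writing $V := \text{nodes}(\Omega_h)$, $E := \text{edges}(\Omega_h)$, and $T := \text{elements}(\Omega_h)$ for the numbers of vertices, edges, and triangles, the target identity $V = E - (T-1)$ is equivalent to
\[
V - E + T = 1,
\]
which is exactly the assertion that the combinatorial Euler characteristic of $\Omega_h$, computed as the alternating sum of the numbers of $k$-simplices, agrees with that of a contractible planar region.

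First I would verify the base case of a single triangle, for which $V = 3$, $E = 3$, $T = 1$, so that $V - E + T = 3 - 3 + 1 = 1$. Second, I would argue by induction on $T$, building the tessellation one element at a time while keeping each intermediate complex connected and simply connected. Because the mesh is conforming, a newly attached triangle meets the current complex edge-to-edge, and only two cases arise: (i) it shares exactly one edge with the existing complex, contributing one new vertex, two new edges, and one new triangle, whence $\Delta(V - E + T) = 1 - 2 + 1 = 0$; or (ii) it shares two edges (closing off a notch), contributing no new vertex, one new edge, and one new triangle, whence $\Delta(V - E + T) = 0 - 1 + 1 = 0$. In either case the quantity $V - E + T$ is preserved, so it remains equal to $1$ throughout, yielding the claim.

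The main obstacle is not the arithmetic, which is elementary, but the topological bookkeeping underlying the induction: one must guarantee that the triangles can be ordered so that each attachment falls into case (i) or (ii), and in particular that the degenerate case of gluing a triangle along all three of its edges never occurs. Such a move would add one triangle but no new vertices or edges, changing $V - E + T$ by $+1$; it corresponds precisely to filling in a bounded hole and thereby lowering the first Betti number. This is where the zero-cohomology assumption is essential, since for a domain with $b_1$ holes the identity becomes $V - E + T = 1 - b_1$. Alternatively, one may sidestep the case analysis entirely by invoking the topological invariance of the combinatorial Euler characteristic together with $\chi(\Omega) = 1$ for a contractible $\Omega$; the inductive argument above simply makes the mechanism, and the role of the hypothesis, transparent.
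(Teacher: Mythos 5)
Your proposal is correct and takes essentially the same route as the paper, whose entire proof is the single observation that the identity is a reformulation of Euler's formula for planar graphs (equivalently, $V - E + T = 1$ for a triangulation of a simply connected planar region). Your supplementary induction is a reasonable elaboration, though its case analysis as written omits conforming attachments that identify existing vertices (e.g., a new triangle sharing one edge whose opposite vertex already lies in the complex, giving $\Delta(V - E + T) = -1$ and creating a hole); you already close this gap yourself by noting one can instead invoke the topological invariance of the combinatorial Euler characteristic together with $\chi(\Omega) = 1$ for contractible $\Omega$.
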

\begin{proof}
    This is a reformulation of Euler's formula for planar graphs.
\qed\end{proof}

The difference in output space dimension between the two approaches therefore becomes larger for finer meshes. A similar but weaker result holds in 3D. We recall that, in this case, the potential space of the curl-variation, $\mathbb{N}_{0}(\Omega_{h})$, has the degrees of freedom defined over the mesh edges, while for the SpT-variation, the degrees of freedom are defined over the mesh faces.

\begin{lemma}
    In a 3-dimensional setting, the simplicial tessellation $\Omega_{h}$ satisfies
    \begin{align}
        \label{eq:dim3d}
        \emph{edges}(\Omega_{h})\le\emph{faces}(\Omega_{h})+2.
    \end{align}
\end{lemma}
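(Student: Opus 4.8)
The plan is to combine the Euler--Poincar\'e relation for the tessellation with a separate combinatorial estimate on the number of nodes. Since we operate under the zero-cohomology assumption of \Cref{sub: diff complex}, i.e.\ a contractible domain, the alternating sum of the cell counts equals the Euler characteristic of a ball, so that
\[
\text{nodes}(\Omega_h) - \text{edges}(\Omega_h) + \text{faces}(\Omega_h) - \text{elements}(\Omega_h) = 1.
\]
Rearranging, the claimed bound $\text{edges}(\Omega_h) \le \text{faces}(\Omega_h) + 2$ is seen to be \emph{equivalent} to the node estimate $\text{nodes}(\Omega_h) \le \text{elements}(\Omega_h) + 3$, and this is the inequality I would actually prove. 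I would stress at this point that, in contrast to the planar case, Euler's relation alone does not suffice: the statement is a genuine inequality, and the extra input is the geometric fact that two neighboring tetrahedra meet along an entire triangular facet.

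To establish $\text{nodes}(\Omega_h) \le \text{elements}(\Omega_h) + 3$, I would grow the mesh one element at a time, in the same spirit as the spanning-tree construction of \Cref{sub: spanning tree}, but now on the \emph{dual} graph: its nodes are the mesh elements and its edges join pairs of elements sharing a facet. Connectedness of $\Omega$ makes this dual graph connected, so it admits a spanning tree; I would then enumerate the elements in a root-first (breadth-first) order, so that every non-root element is adjacent, across a shared facet, to an element already introduced. Such a facet already supplies three of the four vertices of the incoming tetrahedron, so each of the $\text{elements}(\Omega_h) - 1$ non-root elements contributes at most one new node. Together with the four nodes of the root tetrahedron this gives $\text{nodes}(\Omega_h) \le 4 + (\text{elements}(\Omega_h) - 1) = \text{elements}(\Omega_h) + 3$, and substituting back into the Euler relation yields $\text{edges}(\Omega_h) \le \text{faces}(\Omega_h) + 2$.

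The step deserving the most care is this node count, and there are two small points I would verify explicitly. First, that every node is eventually accounted for: each node lies in at least one element, and since the spanning tree visits all elements, no node is missed. Second, that a shared facet genuinely forces three shared vertices, so that the per-element increment is at most one; this is where both connectedness and the simplicial structure enter. I would also record that the bound is sharp: a single tetrahedron has $6$ edges and $4$ faces, attaining equality, and more generally any chain of tetrahedra glued facet-to-facet keeps $\text{nodes} - \text{elements} = 3$, so the constant $2$ cannot be lowered. The weaker, inequality form (as opposed to the 2D equality) is intrinsic: in two dimensions Euler's relation already links nodes, edges and elements directly, whereas in three dimensions it couples all four counts, so isolating edges against faces forces one to eliminate the node count, which can only be done up to the one-sided bound $\text{nodes} \le \text{elements} + 3$.
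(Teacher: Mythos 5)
Your proof is correct, but it takes a genuinely different route from the paper's. The paper never invokes Euler's formula: it grows the mesh one facet-adjacent tetrahedron at a time, starting from a single tetrahedron (where $\textnormal{edges}=6=\textnormal{faces}+2$), and verifies directly, through a four-case analysis according to how many faces the incoming element shares with the existing grid, that the difference $\textnormal{faces}(\Omega_h)-\textnormal{edges}(\Omega_h)$ never decreases. You instead use the Euler--Poincar\'e relation $V-E+F-T=1$ to convert the claim into the node bound $V\le T+3$, which you then prove by essentially the same growth mechanism (BFS on the dual graph, each non-root element contributing at most one new vertex through its shared facet). The trade-offs are real: your reduction buys simpler per-step bookkeeping (a single count, at most one new node, no case analysis) together with the sharpness discussion ($V-T=3$ along facet-glued chains, so the constant $2$ is attained), whereas the paper's direct count buys topology-independence --- it requires no contractibility, so \eqref{eq:dim3d} holds, e.g., for a tessellated solid torus, where your Euler-based route ($\chi=0$) would only yield $\textnormal{edges}\le\textnormal{faces}+3$. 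Since the lemma as stated carries no topological hypothesis, it is worth noting explicitly (as you partly do by invoking the zero-cohomology setting of \Cref{sub: diff complex}) that your argument proves the lemma only in that restricted scope, while the paper's incremental count gives the bound unconditionally. One minor caveat applies equally to both proofs: the element-by-element enumeration requires connectedness of the facet-adjacency (dual) graph, which excludes meshes pinched at a vertex or edge; this is guaranteed here by the Lipschitz assumption on $\Omega$, but neither your write-up nor the paper's states it.
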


\begin{proof}
    If $\Omega_{h}$ consists of a single tetrahedron, then $\textnormal{edges}(\Omega_{h})= 6 = \textnormal{faces}(\Omega_{h}) + 2$. Now, in order to address the general case, we note that any expansion of an existing simplicial grid by a new adjacent element (two elements are adjacent iff they share a common face) only increases the difference between the number of edges and faces. There are four possibilities to expand a simplicial grid, depending on how many faces the new element shares with the existing grid: 
    \begin{enumerate}
        \item Adding a new element that shares one face with the grid introduces 3 new faces and 3 new edges, so the difference $\textnormal{faces}(\Omega_{h}) - \textnormal{edges}(\Omega_{h})$ remains the same.
        \item If the new element shares two faces with the grid, then two faces and one new edge need to be added. The difference increases by one.
        \item If the new element shares three faces with the grid, then one only needs to add a single face. The difference increases by one.
        \item The new element shares four faces with the grid, meaning that only the internal volume of the element is missing (no inclusion of extra faces or edges is required).
    \end{enumerate}
    In short, the difference between numbers of faces and edges cannot decrease as the mesh is expanded and the result follows.
\qed\end{proof}

We emphasize that \eqref{eq:dim3d} only provides a weak bound, as one has $\textnormal{edges}(\Omega_{h})<\textnormal{faces}(\Omega_{h})$ in most practical cases. 

\section{Numerical results}
\label{sec: Numerical results}
We assess the performance of the proposed approaches for three different test cases, ranging from mixed-dimensional to non-linear PDEs governing Darcy flow systems.
In the first test case, \Cref{subsec:exp:sines}, we consider a simple 2D problem in which the model parameters present a strong interaction with the space variable. In \Cref{subsec:exp:fractures}, we explore a geometrically more involved scenario, concerning a Darcy flow in a fractured, three-dimensional medium.
Finally, in \Cref{subsec:exp:nonlinear}, we consider a non-linear model based on the Darcy-Forchheimer law.

In order to present a systematic analysis for all three test cases, we follow a standardized protocol that we summarize as follows:
\begin{enumerate}[label = \arabic*)]
    \item We use the FOM to generate a collection of training data, $\{\mu_{i},\bq^{\mu_{i}}\}_{i=1}^{N_{s}}$, which we sample via Latin hypercube over the parameter space. In doing so, we implement the operator $S_{I}$ using the multi-tree variation discussed in Remark \ref{remark: averaged-tree}, specifically using a total of 10 spanning-trees.
    \item We train the three ROMs as detailed in Sections \ref{sub: PODNN}-\ref{sub: sptROM}.
    \item We evaluate the quality of the flux approximations of the three ROMs by computing the relative error below
    \begin{align}
        \mathscr{E}(\text{ROM})=\frac{1}{N_{\text{test}}}\sum_{i=1}^{N_{\text{test}}}\frac{\|\bq_{i}^{\text{test}}-\text{ROM}(\mu_{i}^{\text{test}})\|_{X}}{\|\bq_{i}^{\text{test}}\|_{X},}
    \end{align}
    where $\|\cdot\|_{X}$ is either the $L^{2}$-norm or the $H(\vdiv)$-norm $\| \bq \|_{H(\vdiv)}^2 = \| \bq \|_\Omega^2 + \| \div \bq \|_\Omega^2$, to incorporate the satisfaction of mass conservation. Here, $\{\mu_{i}^{\text{test}},\bq_{_{i}}^{\text{test}}\}_{i=1}^{N_{\text{test}}}$ are an additional collection of FOM data, the so-called test set, that we use for a posteriori evaluations.
    \item We postprocess the flux approximations as to evaluate the quality of the corresponding pressure fields. This is done on the same test set as used in the previous step.
\end{enumerate}

To better justify the effectiveness of the proposed strategies, we also compare the performances of our approaches with those of a naive DNN (Deep Neural Network) regression. That is, we build a benchmark ROM by directly training a neural network model $\Phi$ to learn the map $\mu\to \bq^{\mu}$ in a supervised way, which in practice corresponds to minimizing the loss function below
\begin{align}
    \loss(\Phi):=\frac{1}{N_{s}}\sum_{i=1}^{N_{s}}\|\bq^{\mu_{i}}-\Phi(\mu_{i})\|_{X}^{2},
\end{align}
where, as before, $\|\cdot\|_{X}$ can be either the $L^{2}$ or the $H(\vdiv)$ norm: depending on this choice, we end up with two possible ROMs, which we refer to as ``black-box $L^2$'' and ``black-box $H(\vdiv)$'', respectively. Note that in both cases, the resulting ROM is not guaranteed to yield physical outputs in the sense that it may violate the linear constraint in the PDE model. 

\renewcommand{\thefootnote}{\fnsymbol{footnote}}
Full-order simulations were computed in Python, using PorePy \cite{Keilegavlen2020}, PyGeoN \cite{pygeon} and dedicated modules that are publicly available. Conservative ROMs, instead, were implemented in Pytorch \cite{paszke2019pytorch}. Demonstrative code and trained ROMs are available in our Github repository\footnote[2]{\href{https://github.com/compgeo-mox/conservative_ml}{https://github.com/compgeo-mox/conservative\_ml}}.

\subsection{Dirichlet problem with sinusoidal source}
\label{subsec:exp:sines}

The first test case features problem \eqref{eq: model problem} with $A_{\bq} = I$.
We let $\Omega = (0, 1)^2$ be the domain of interest, which we discretized with a mesh size of $1/32$. This results in a computational grid that has 2400 cells, 3664 faces, and 1265 nodes. 
We set homogeneous boundary conditions for the pressure and a parametrized source function as
\begin{align}
    \label{eq:exp1:source}
    f(x_0, x_1) &= \sin(\mu_0 2 \pi x_0) \sin(\mu_1 2 \pi x_1), &
    \mu_0, \mu_1 &\in [1, 4].
\end{align}
We moreover set a constant vector source term $\bg\equiv[1.0, 0.0]^{T}$ in \eqref{eq:darcy-type}.

We use the FOM solver to sample a total of 2000 different PDE solutions, where $N_{s}=1500$ are used for training and $N_{\text{test}}=500$ for testing. To keep the paper self-contained, we present all the technical details related to the structural hyperparameters of the neural network architectures in \Cref{sec: Appendix - Architectures}.




The results are in Figures \ref{fig:case1flow}-\ref{fig:case1boxplots} and \Cref{tab:exp:sines}. 
We note that, among the black-box approaches, the $L^{2}$-variant performs significantly better than its $H(\vdiv)$-counterpart. This is notable as it highlights that the inclusion of a divergence term in the loss function is not guaranteed to yield better results. Instead, the ROM performance may actually deteriorate. This behavior is likely caused by the intrinsic difficulty in minimizing the loss function, which becomes more and more challenging as we include extra terms in it.

Nevertheless, while the approximations proposed by the $L^{2}$-surrogate are graphically comparable to the reference one, both in terms of flux and pressure, they are not physical, as they violate the conservation of mass. This fact is observed in Figure \ref{fig:case1boxplots} and \Cref{tab:exp:sines}, where we see that the $H(\vdiv)$ relative errors are much higher than those in the $L^{2}$-norm. This indicates a violation of the linear constraint, as for any $\bq,\tilde{\bq}$ satisfying $\div\bq=f=\div\tilde{\bq}$ one would expect that
\begin{equation}
\label{eq:conservative inequality}
\frac{\|\bq-\tilde{\bq}\|_{L^{2}}}{\|\bq\|_{L^{2}}}\ge\frac{\|\bq-\tilde{\bq}\|_{L^{2}}}{\|\bq\|_{H(\vdiv)}}=\frac{\|\bq-\tilde{\bq}\|_{H(\vdiv)}}{\|\bq\|_{H(\vdiv)}},\end{equation}
that is: in relative terms, the errors in $H(\vdiv)$ should be smaller than in $L^{2}$. Indeed, this is what we observe for all conservative approaches. Among these, the best one is undoubtedly the Curl-DL-ROM, which manages to:
i) capture the flux field, with a relative error of 0.1\% and an exact conservation of mass, ii) retrieve a good approximation of the pressure field, and iii) keep the computational cost of the training phase under control. It is noteworthy that the spanning-tree variation of the DL-ROM approach achieves similar results, but at a higher computational cost. On the other hand, the Conservative POD-NN is undeniably the most cost-effective option for training, albeit with slightly inferior performance (see, e.g., the small artifacts and deformations in Figures \ref{fig:case1flow}-\ref{fig:case1pressure}). In this concern, we recall that, due to \eqref{eq:exp1:source}, the problem under study features a strong interaction between the space variable and the model parameters. It therefore is not surprising to see that, even with as many as 100 basis functions (cf. \Cref{sec: Appendix - Architectures}), projection-based techniques do not rank as top performers. 
We also point out that, as clearly indicated in Figure \ref{fig:case1boxplots}, all the conservative approaches perform very well \emph{overall}, i.e., throughout the test set and not merely \emph{on average}. In contrast, the performances of the black-box surrogates are much more volatile, especially in $H(\vdiv)$ norm.


Finally, it is interesting to note that, in line with Lemma \ref{lemma: pressure}, the ROMs reporting the smallest $L^{2}$ errors on the flux are also those providing a better approximation of the pressure fields. While the errors increase by two orders of magnitude, we consider these results satisfactory, as the pressure field was never explicitly involved during the training phase. We emphasize that the main purpose of this work is to provide a reliable approximation of the flux component. Further improvements are required if a higher quality of the pressure approximation is required, but that is beyond the scope of this work.

\definecolor{Gray}{gray}{0.95}
\renewcommand{\arraystretch}{1.5}
\begin{table}
    \centering
    \caption{Models comparison for the first case study, \Cref{subsec:exp:sines}. All the errors reported refer to the average performance over the test set.}
    \begin{tabular}{lllllr}
    \hline
     \textbf{Model} & \textbf{Map onto}\;\;& \textbf{$L^{2}$ error} & \textbf{$H(\vdiv)$ error} & \textbf{$L^{2}$ error}&\textbf{Training time}\vspace{-0.4em}\\
     &\textbf{kernel} $S_{0}$\;\;&Flow $\bq$&Flow $\bq$&Pressure $p$&\\
     \hline\hline
     Black-box $L^{2}$& None & \;\;\;0.20\% &\;\;\;\;\;\;\;\;\;\;9.00\% & \;\;\;\;10.52\%& 4m 50.0s\\
     Black-box $H(\vdiv)$& None & \;\;\;1.79\% &\;\;\;\;\;\;\;\;13.35\% & \;\;146.30\% & 6m 13.4s\\
     Conserv. POD-NN & $V_n$ & \;\;\;0.17\% &\;\;\;\;\;\;\;\;\;\;0.15\% & \;\;\;\;40.54\%& 2m 50.1s\\
     \rowcolor{Gray} Conserv. DL-ROM & $\curl$ & \;\;\;0.10\%&\;\;\;\;\;\;\;\;\;\;0.09\% &\hspace{1.5em}9.31\%& 4m 52.7s\\
      Conserv. DL-ROM & $I-S_{I}B$ & \;\;\;0.15\% &\;\;\;\;\;\;\;\;\;\;0.14\% &\hspace{1.0em}32.50\% & 8m 5.02s\\\hline\\
    \end{tabular}
    \label{tab:exp:sines}
\end{table}
\begin{figure}
    \centering
    \includegraphics[width=0.3\textwidth]{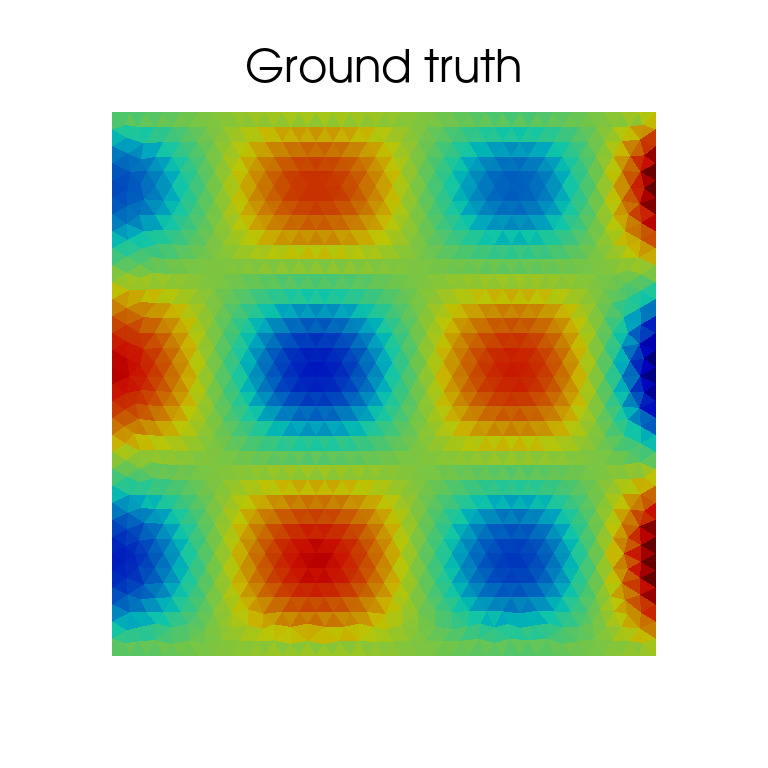}
    \includegraphics[width=0.3\textwidth]{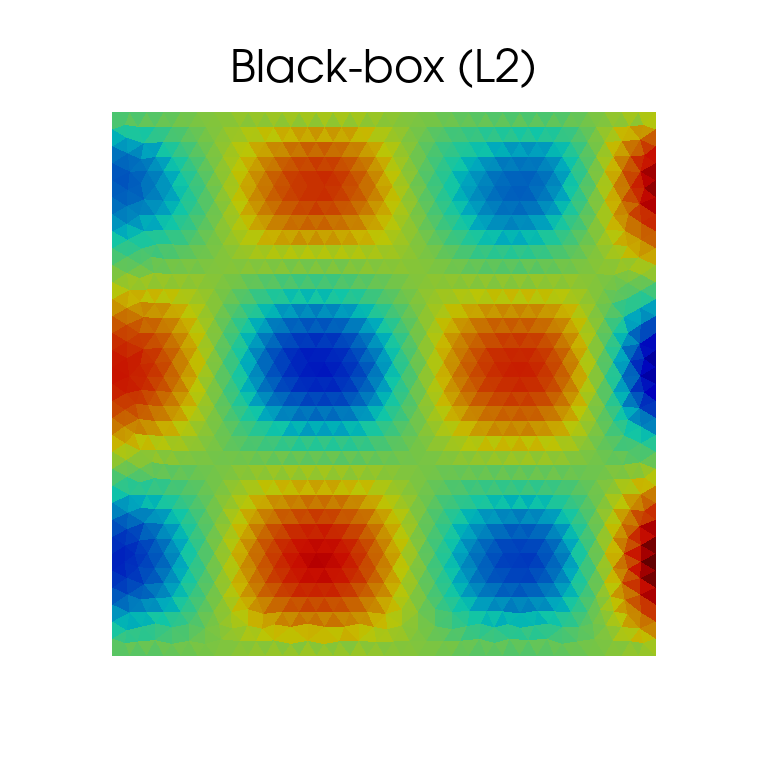}
    \includegraphics[width=0.3\textwidth]{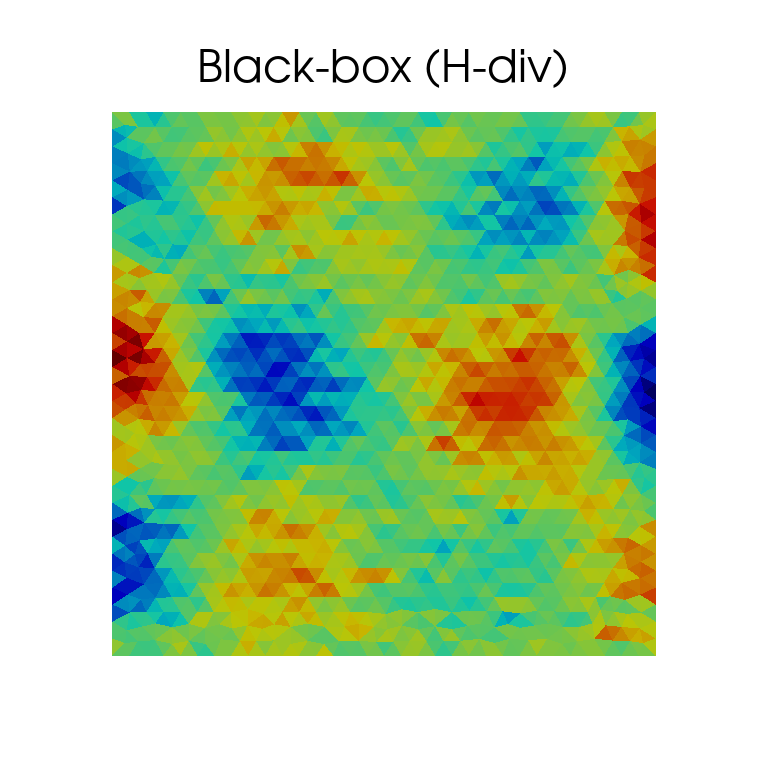}
    \includegraphics[height=3.45cm]{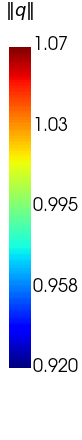}
    \\\vspace{-0.5cm}
    \includegraphics[width=0.3\textwidth]{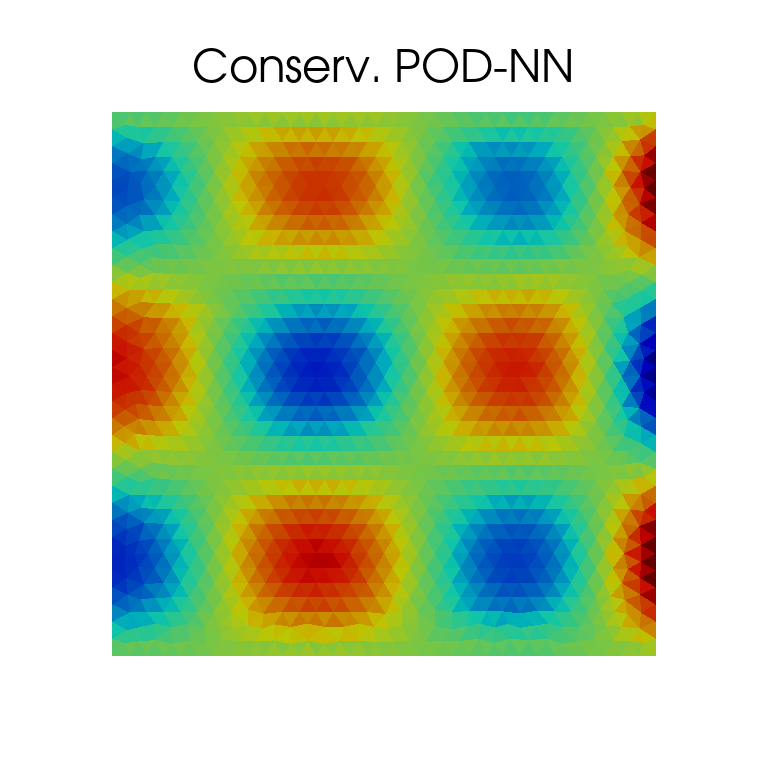}
    \includegraphics[width=0.3\textwidth]{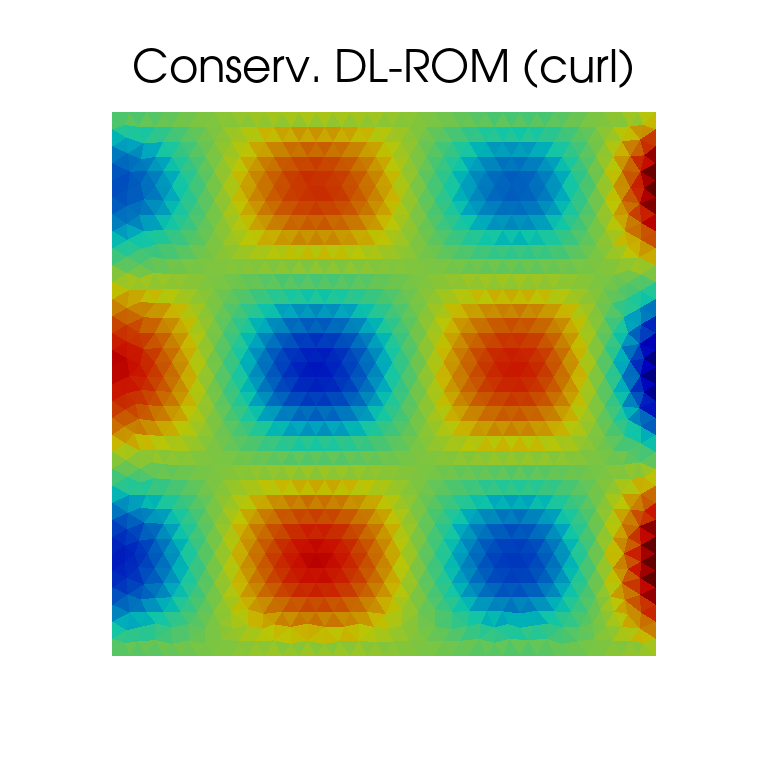}
    \includegraphics[width=0.3\textwidth]{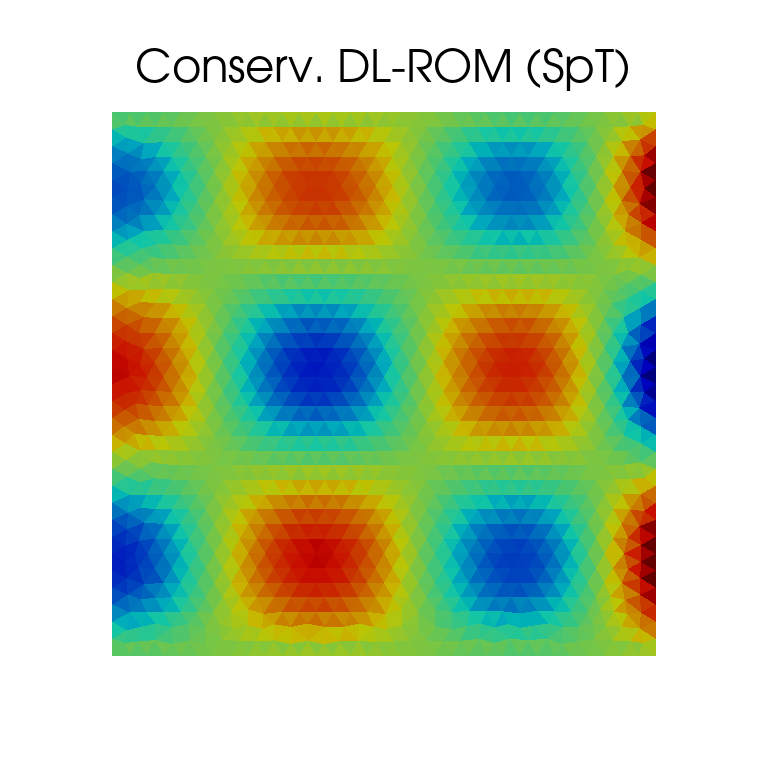}
    \includegraphics[height=3.45cm]{fig/case1/case1flow-bar.png}
    \caption{Magnitude of the velocity fields $|\bq|$ for Case 1, \Cref{subsec:exp:sines}. FOM solution (top left) vs ROM approximations for an unseen configuration of the problem parameters, $\bmu=[1.332, 1.423].$ See Figure \ref{fig:case1pressure} for the corresponding pressure fields.}
    \label{fig:case1flow}
\end{figure}
\begin{figure}
    \centering
    \includegraphics[width=0.3\textwidth]{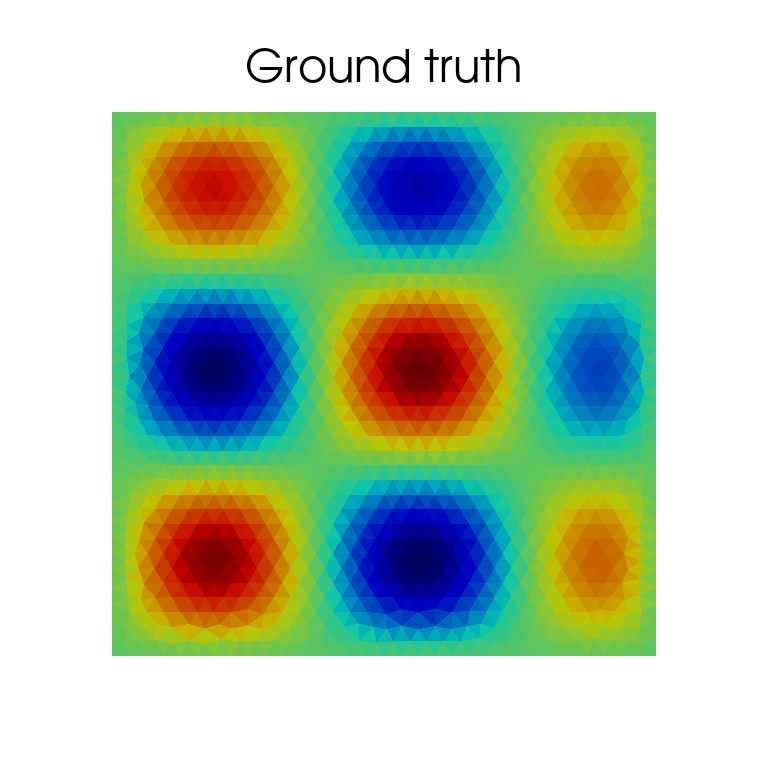}
    \includegraphics[width=0.3\textwidth]{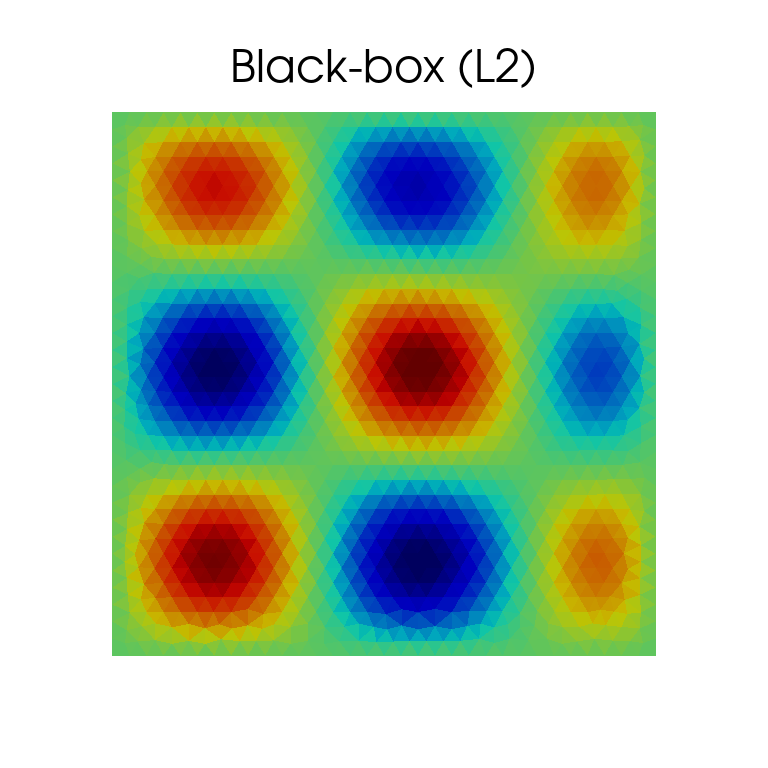}
    \includegraphics[width=0.3\textwidth]{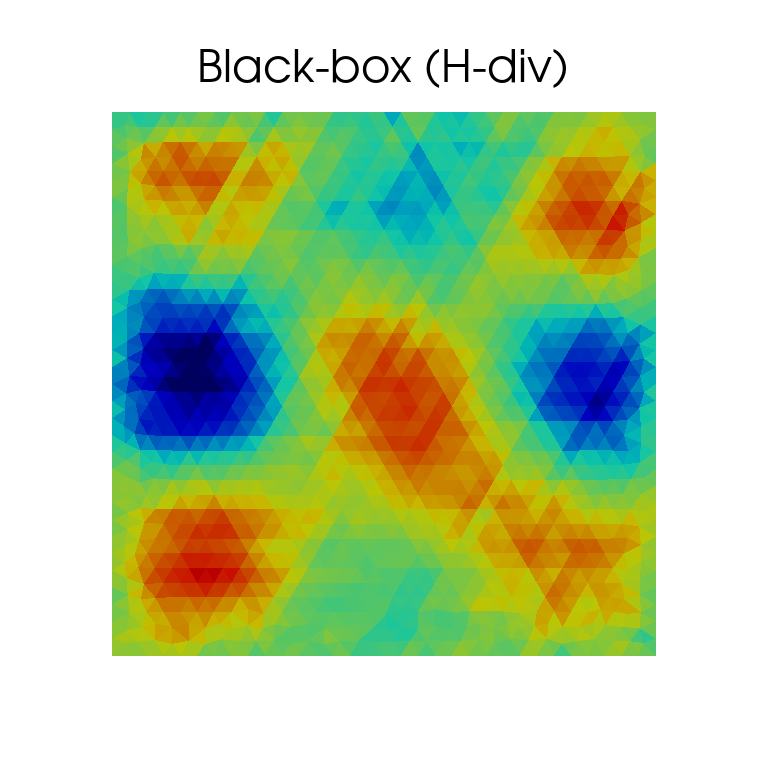}
    \includegraphics[height=3.45cm]{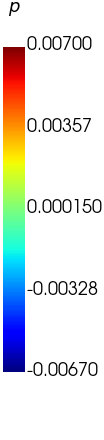}\\
    \includegraphics[width=0.3\textwidth]{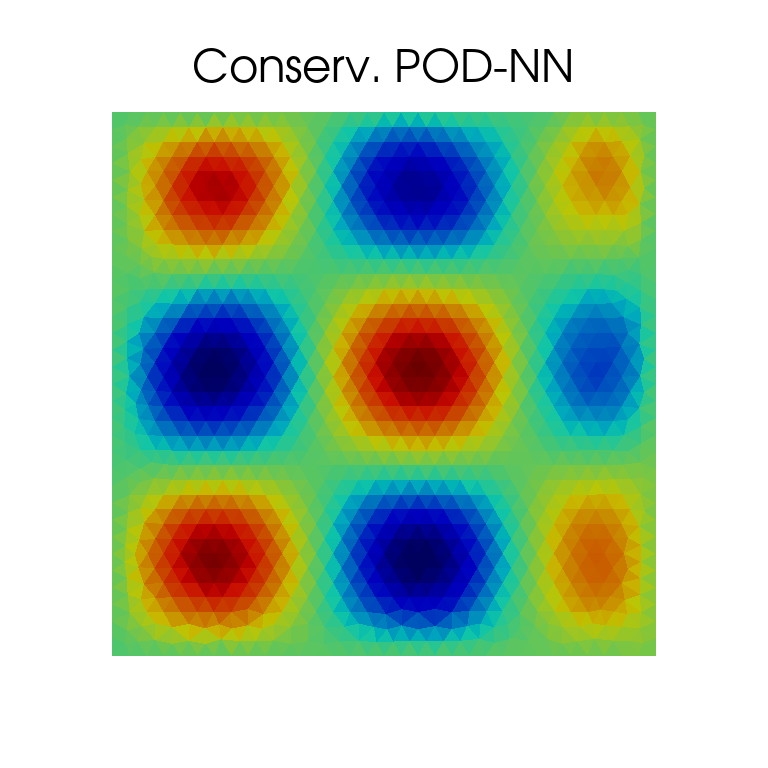}
    \includegraphics[width=0.3\textwidth]{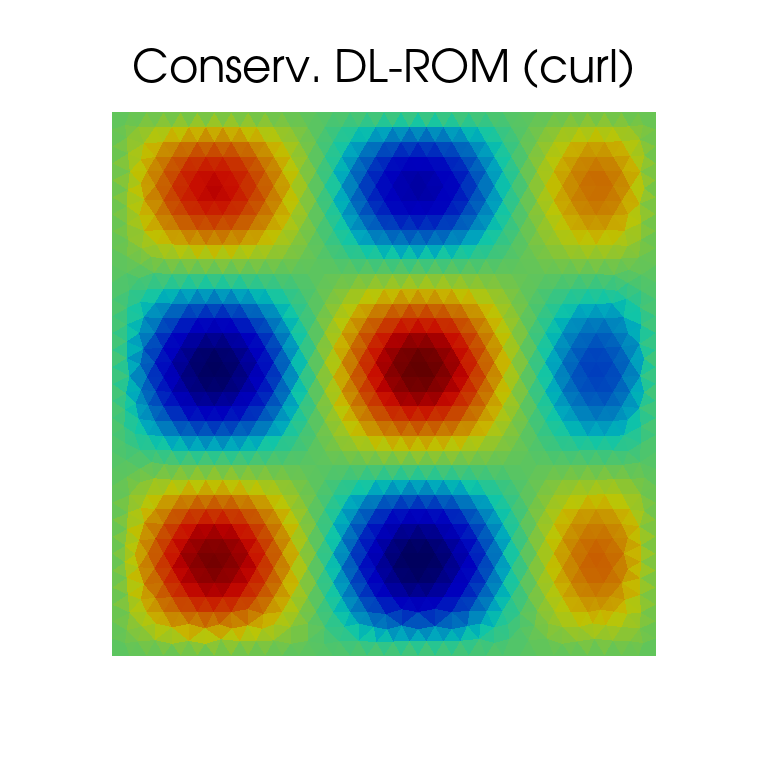}
    \includegraphics[width=0.3\textwidth]{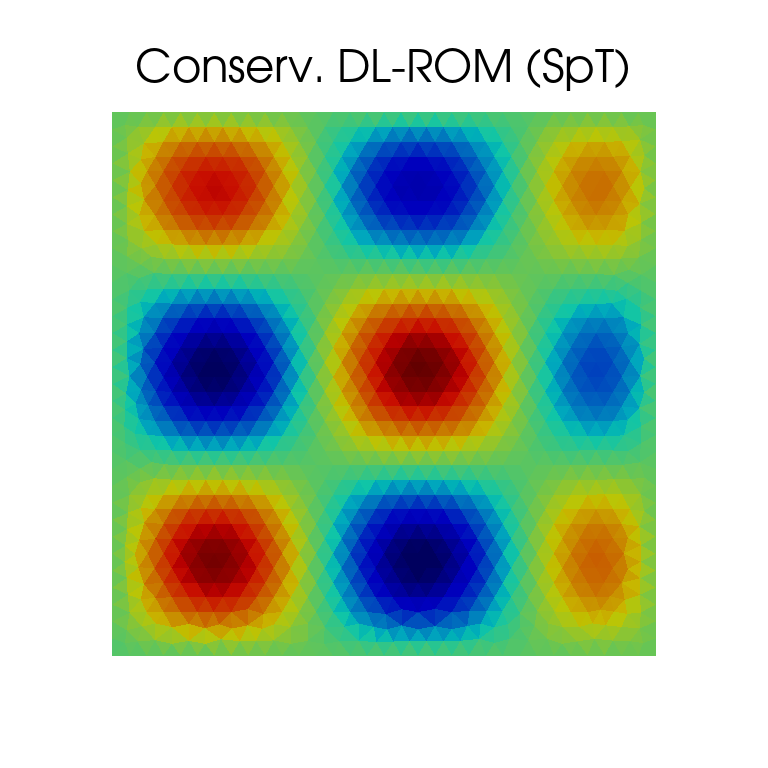}
    \includegraphics[height=3.45cm]{fig/case1/case1pressure-bar.png}
    \caption{Pressure fields $p$ for Case 1, \Cref{subsec:exp:sines}. FOM solution (top left) vs ROM approximations for an unseen configuration of the problem parameters, $\bmu=[1.332, 1.423].$ See Figure \ref{fig:case1flow} for the corresponding flow fields.}
    \label{fig:case1pressure}
\end{figure}



\begin{figure}
    \centering
    \includegraphics[width=\textwidth]{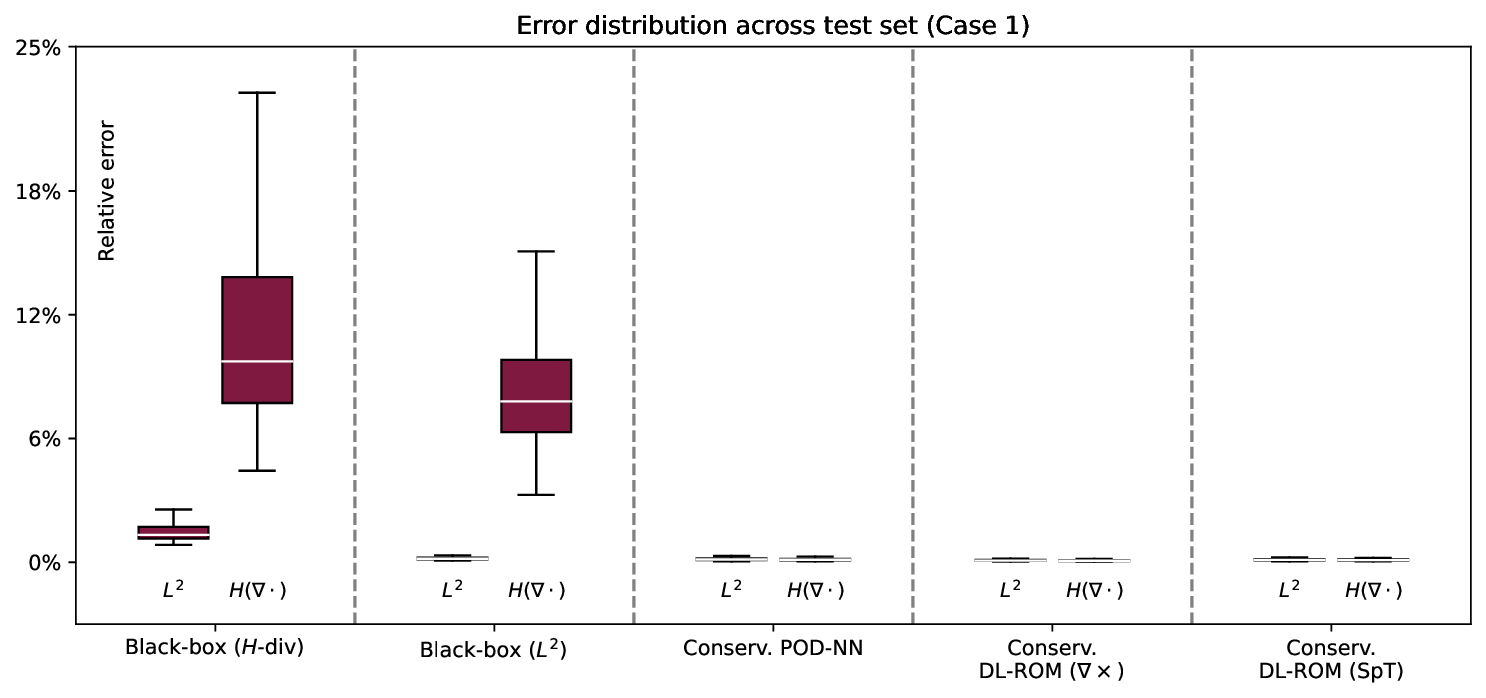}
    \caption{Flux error distributions for the first case study, \Cref{subsec:exp:sines}.}
    \label{fig:case1boxplots}
\end{figure}

\subsection{Darcy flow in a fractured porous medium}
\label{subsec:exp:fractures}

As a second test case, we consider flow in a fractured porous medium in which five fractures
are represented by planar two-dimensional inclusions in a three-dimensional medium. 
We provide a concise presentation of the model in \Cref{sec: Mixed-dimensional model for flow in fractured porous media}. 

We set the conductivity of the bulk matrix to unity and set the aperture constant as $\varepsilon_i = 10^{-4}$ for all subdomains $\Omega_i$. Let us consider the following parametrization for the fracture permeability $K_{frac}$, the pressure boundary conditions, and source term $f$:
\begin{subequations}
\begin{align}
    \label{eq: fractures}
    K_{frac} &= 10^{\mu_0}, &
    \mu_0 &\in [3, 5], \\
    p|_{\partial \Omega}(\bm{x}) &= [\mu_1, \mu_2, \mu_3]^\top \cdot \bm{x}, &
    \mu_1, \mu_2, \mu_3  &\in [0, 1], \\
    f(\bm{x}) &=
    \begin{cases}
        \mu_4, & \bm{x} \in \Omega_{frac} \\
        0, & \bm{x} \in \Omega_{bulk}
    \end{cases} &
    \mu_4 &\in [1, 2].
\end{align}
\end{subequations}
For the FOM solver, we rely on a mixed-dimensional mesh with mesh size $h=0.1$, which results in a total of $17249$ degrees for the flux space. The FOM is used to generate a total of 1000 snapshots, $N_{s}=800$ for training and $N_{\text{test}}=200$ for testing.

We report in Figures \ref{fig:case2flow}-\ref{fig:case2pressure} the solutions obtained with the FOM along with some of the reduced approaches. Since a proper visualization requires plotting both the bulk and the fractures, we have opted to show only the best and worst performing methods of the five ROMs.

As for the previous test case, we see that the approximations provided by the black-box surrogates are visibly unphysical. Furthermore, the results confirm that a forceful inclusion of the divergence term in the loss function is not beneficial, making the $H(\vdiv)$ approach the worst performer. Instead, all conservative ROMs are capable of providing a good approximation of the flux $\bq$, with $L^{2}$ and $H(\vdiv)$ errors below 3\%. 

This time, among the three, POD-NN excels, with average relative errors of 1.16\% and 0.50\% for the flux and pressure, respectively. Furthermore, it is the fastest ROM to train, as it only requires the optimization of a small DNN architecture (here, the potential space was $\dim(\bR)=\dim(V_{n})=n=10)$. Once again, this is not particularly surprising as the underlying PDE is mostly diffusive and the parameters do not directly interact with the space variable. Thus, the solution manifold associated with \eqref{eq: fractures} is expected to present a fast decay of its Kolmogorov $n$-width, meaning that it can be easily approximated by linear subspaces (such as the one given by the POD basis). This means that, while all conservative ROMs are satisfactory, prior knowledge of the problem can guide in the selection of the appropriate ROM.

\begin{figure}
    \centering
    \includegraphics[width=0.29\textwidth]{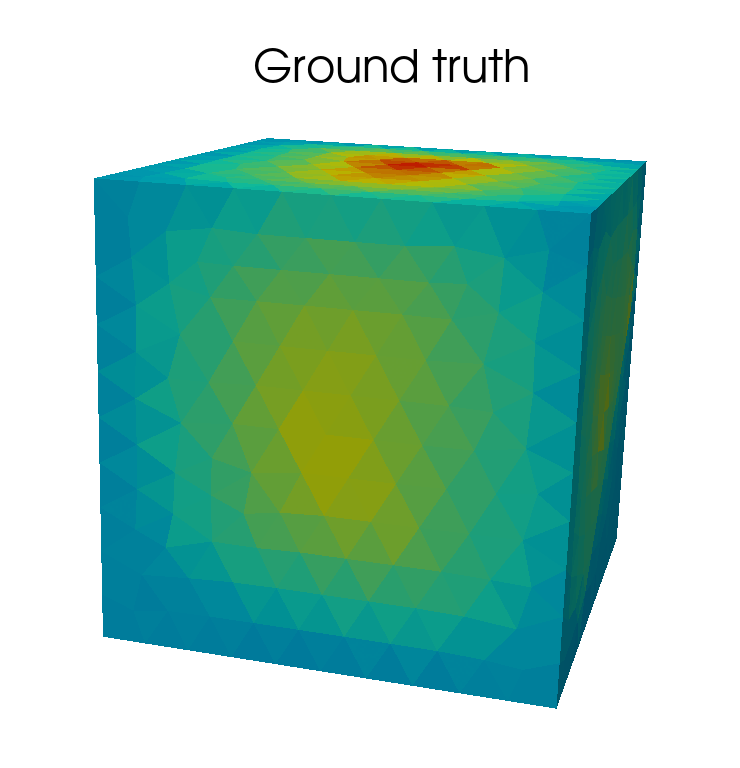}
    \includegraphics[width=0.29\textwidth]{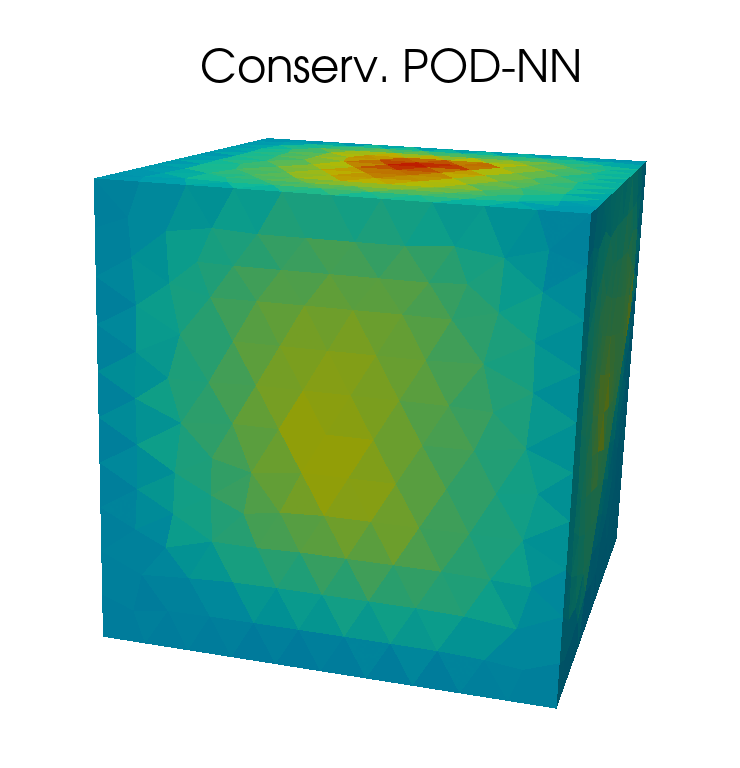}
    \includegraphics[width=0.29\textwidth]{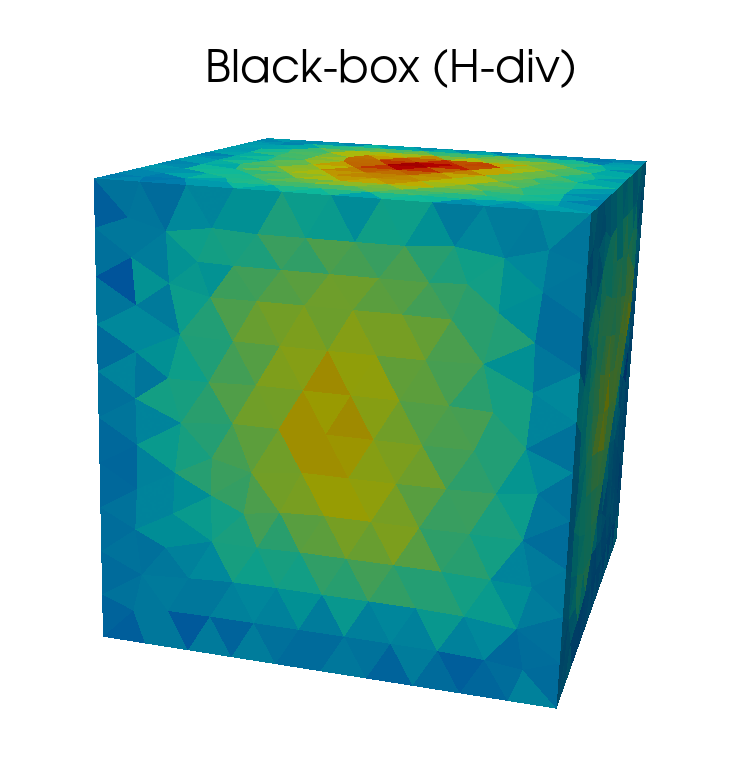}
    \includegraphics[height=3.45cm]{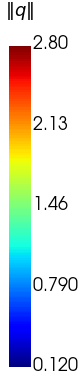}\\
    \includegraphics[width=0.29\textwidth]{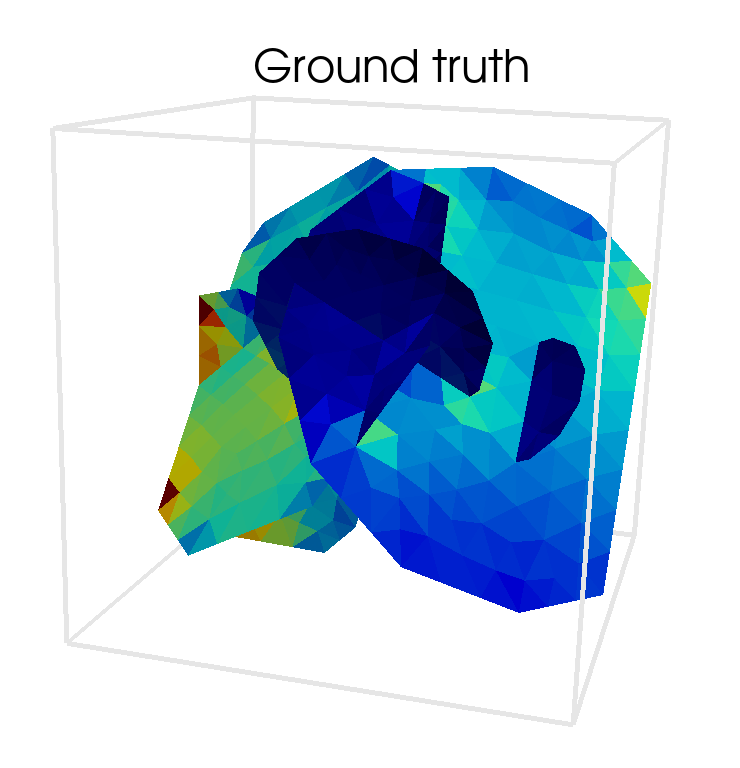}
    \includegraphics[width=0.29\textwidth]{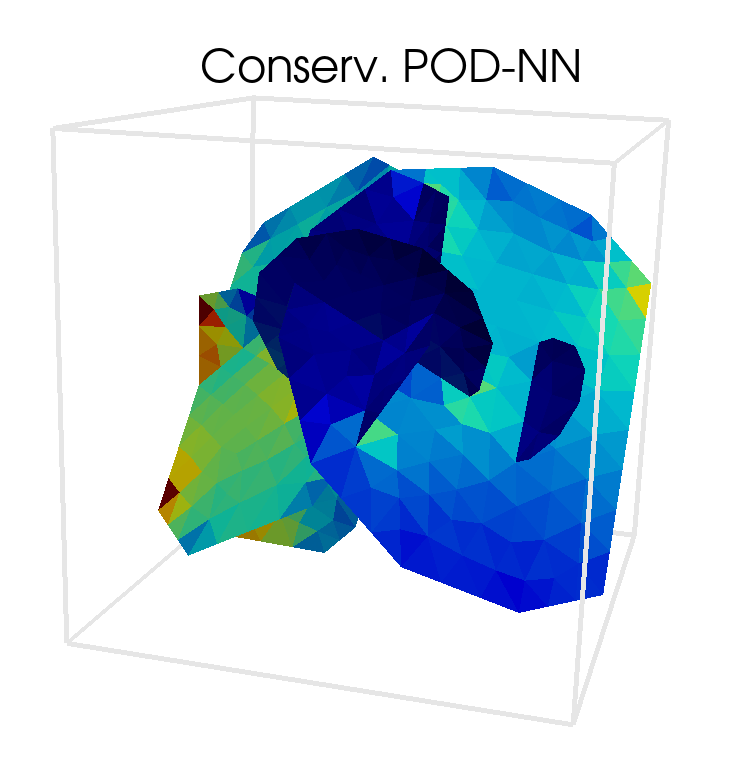}
    \includegraphics[width=0.29\textwidth]{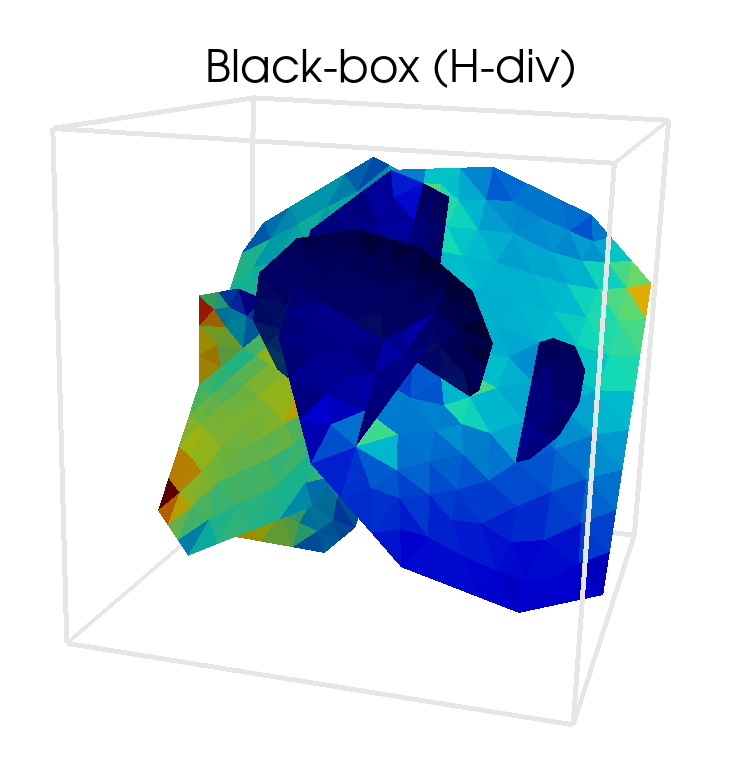}\vspace{0.5em}
    \includegraphics[height=3.45cm]{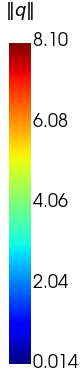}
    \caption{Magnitude of the velocity fields $|\bq|$ for Case 2, \Cref{subsec:exp:fractures}. FOM solution (left) vs ROM approximations for an unseen configuration of the problem parameters, $\bmu=[4.947, 0.406, 0.713, 0.552, 1.651].$ The top row depicts the bulk flux whereas the bottom row concerns the fracture flux. The bulk colorbar has been rescaled to match surface values. See Figure \ref{fig:case2pressure} for the corresponding pressure fields.}
    \label{fig:case2flow}
\end{figure}
\begin{figure}
    \centering
    \includegraphics[width=0.29\textwidth]{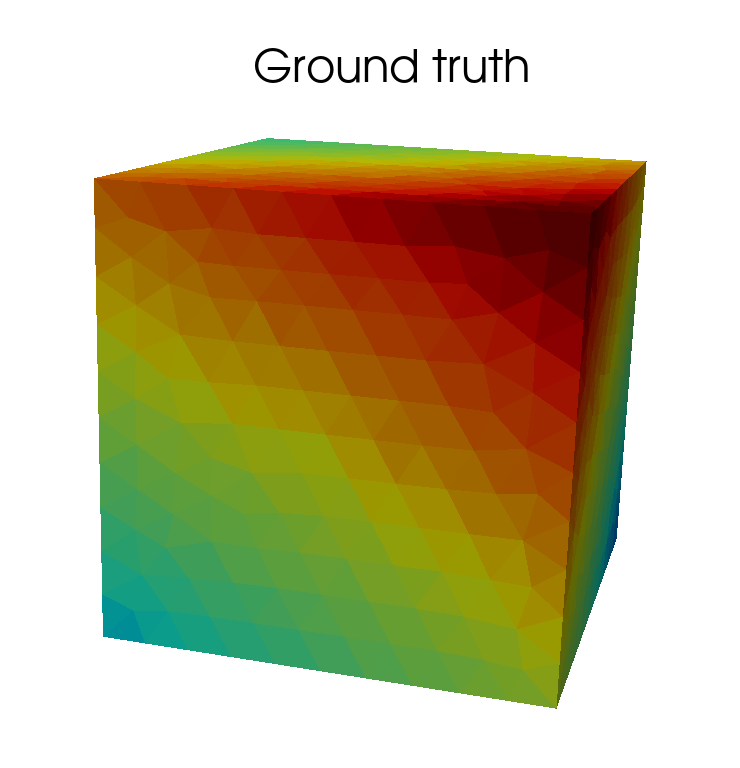}
    \includegraphics[width=0.29\textwidth]{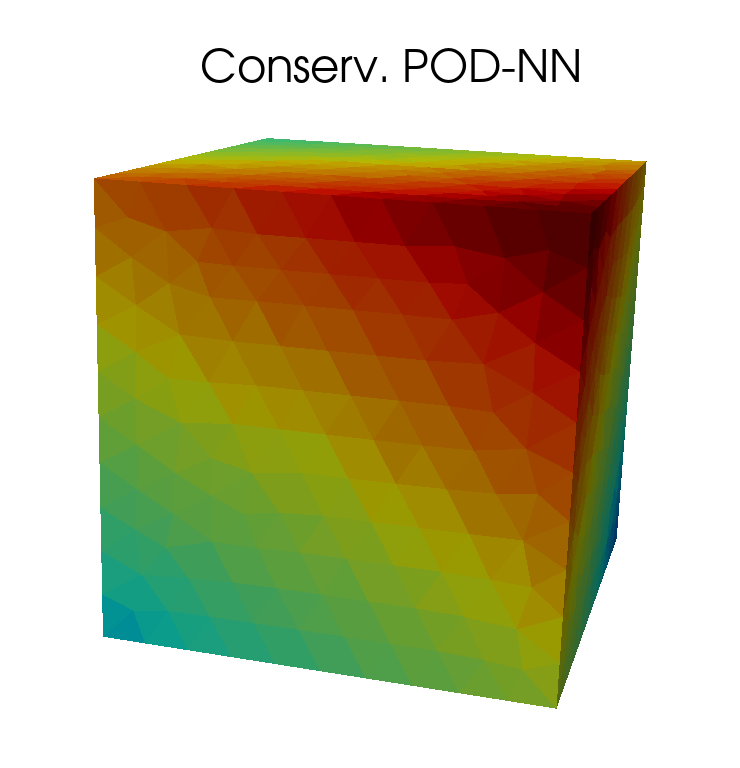}
    \includegraphics[width=0.29\textwidth]{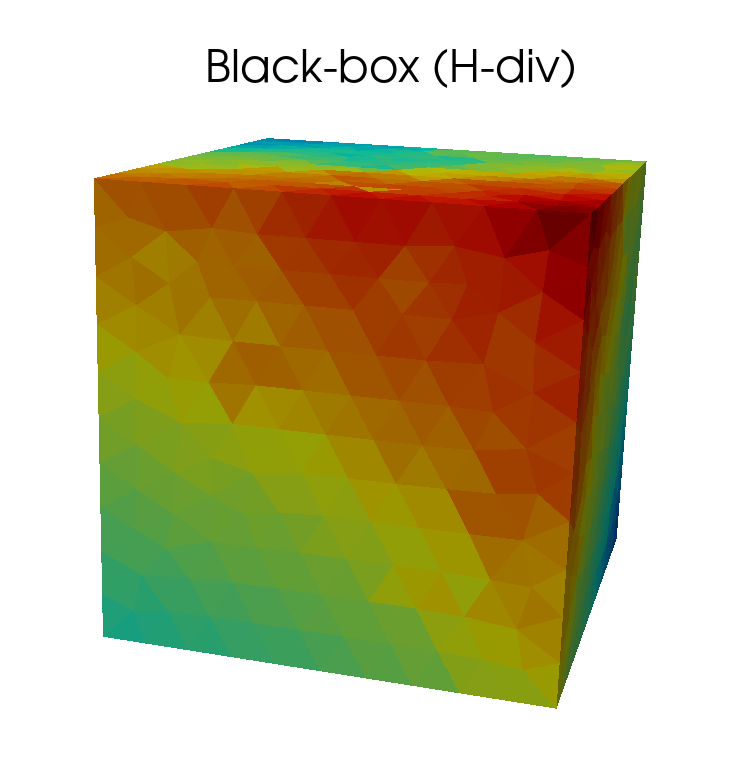}
    \includegraphics[height=3.45cm]{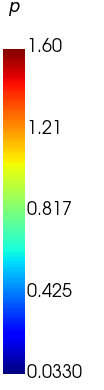}\\
    \includegraphics[width=0.29\textwidth]{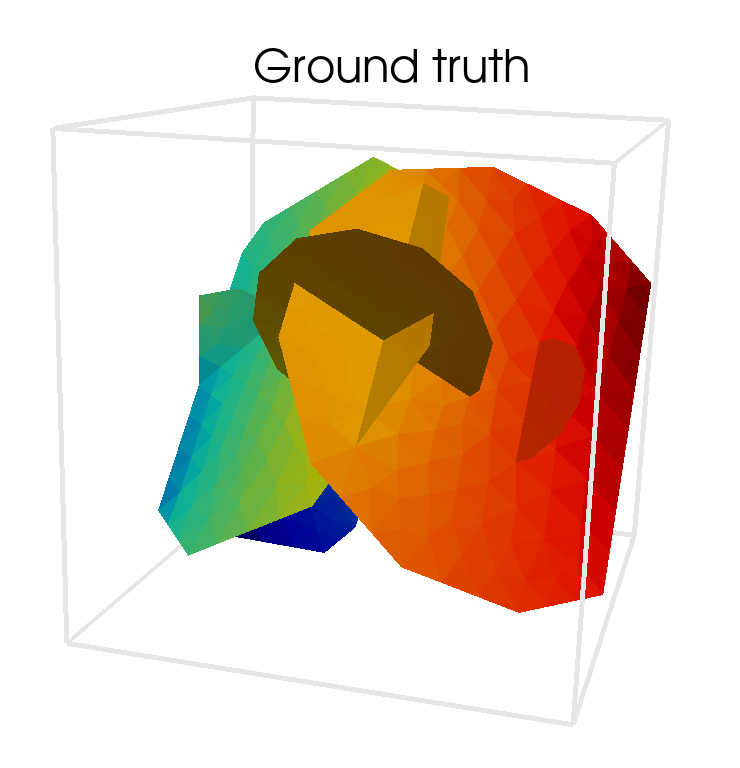}
    \includegraphics[width=0.29\textwidth]{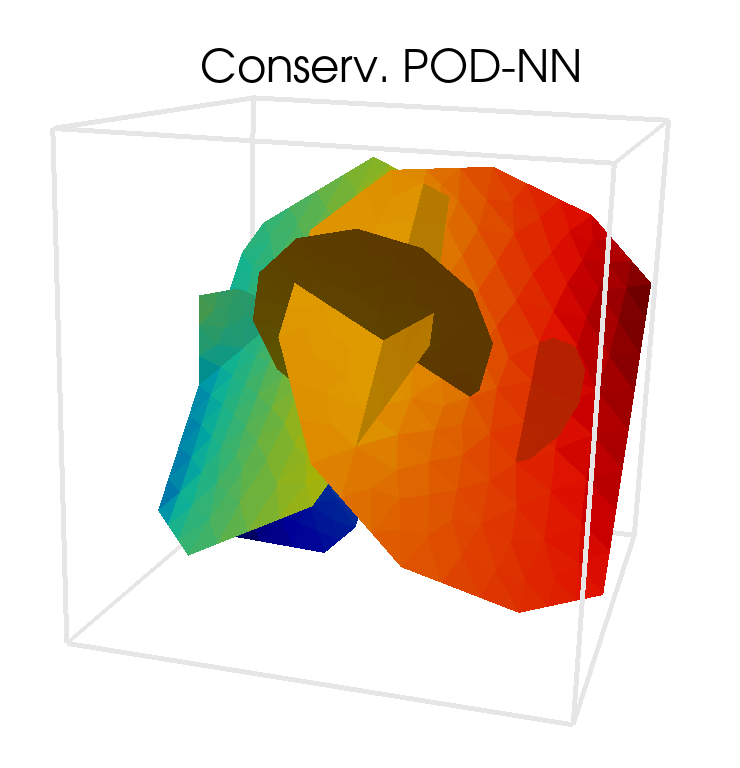}
    \includegraphics[width=0.29\textwidth]{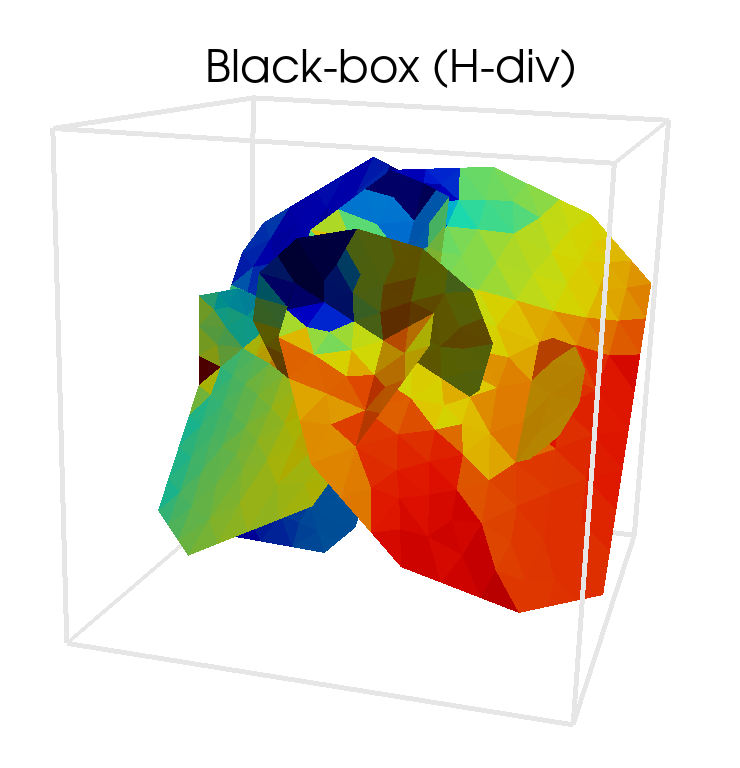}
    \includegraphics[height=3.45cm]{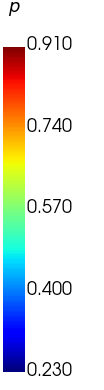}
    \caption{Pressure fields $p$ for Case 2, \Cref{subsec:exp:fractures}. FOM solution (top left) vs ROM approximations for an unseen configuration of the problem parameters, $\bmu=[4.947, 0.406, 0.713, 0.552, 1.651].$ The top row depicts the bulk pressure whereas the bottom row concerns the fracture pressure. See Figure \ref{fig:case2flow} for the corresponding flow fields.}
    \label{fig:case2pressure}
\end{figure}
\begin{table}
    \label{tab:case2errors}
    \centering
    \caption{Models comparison for the second case study, \Cref{subsec:exp:fractures}. Table entries read as in Table \ref{tab:exp:sines}. $\fD\times$ = generalized mixed-dimensional Curl, see \Cref{sec: Mixed-dimensional model for flow in fractured porous media}.}
    \begin{tabular}{lllllr}
    \hline
     \textbf{Model} & \textbf{Map onto}\;\;& \textbf{$L^{2}$ error} & \textbf{$H(\vdiv)$ error} & \textbf{$L^{2}$ error}&\textbf{Training time}\vspace{-0.4em}\\
     &\textbf{kernel} $S_{0}$\;\;&Flow $\bq$&Flow $\bq$&Pressure $p$&\\
     \hline\hline
     Black-box $L^{2}$& None & \;\;\;1.25\% &\;\;\;\;\;\;\;\;\;14.50\% & \;\;\;\;10.15\%& 24m 24.5s\\
     Black-box $H(\vdiv)$& None & \;13.19\% &\;\;\;\;\;\;\;\;\;14.05\% & \;\;317.81\% & 33m 14.7s\\
     \rowcolor{Gray} Conserv. POD-NN & $V_n$ & \;\;\;1.16\% &\;\;\;\;\;\;\;\;\;\;\;1.16\% & \hspace{1.5em}0.50\%& 3m 08.4s\\
     Conserv. DL-ROM & $\fD \times$ & \;\;\;2.59\%&\;\;\;\;\;\;\;\;\;\;\;2.59\%&\hspace{1.em}69.52\%& 31m 25.2s\\
     Conserv. DL-ROM & $I-S_{I}B$ & \;\;\;1.20\% &\;\;\;\;\;\;\;\;\;\;\;1.20\% &\hspace{1.0em}44.47\% & 48m 16.9s\\\hline\\
    \end{tabular}
    \label{tab:exp:fractures}
\end{table}
\begin{figure}
    \centering
    \includegraphics[width=\textwidth]{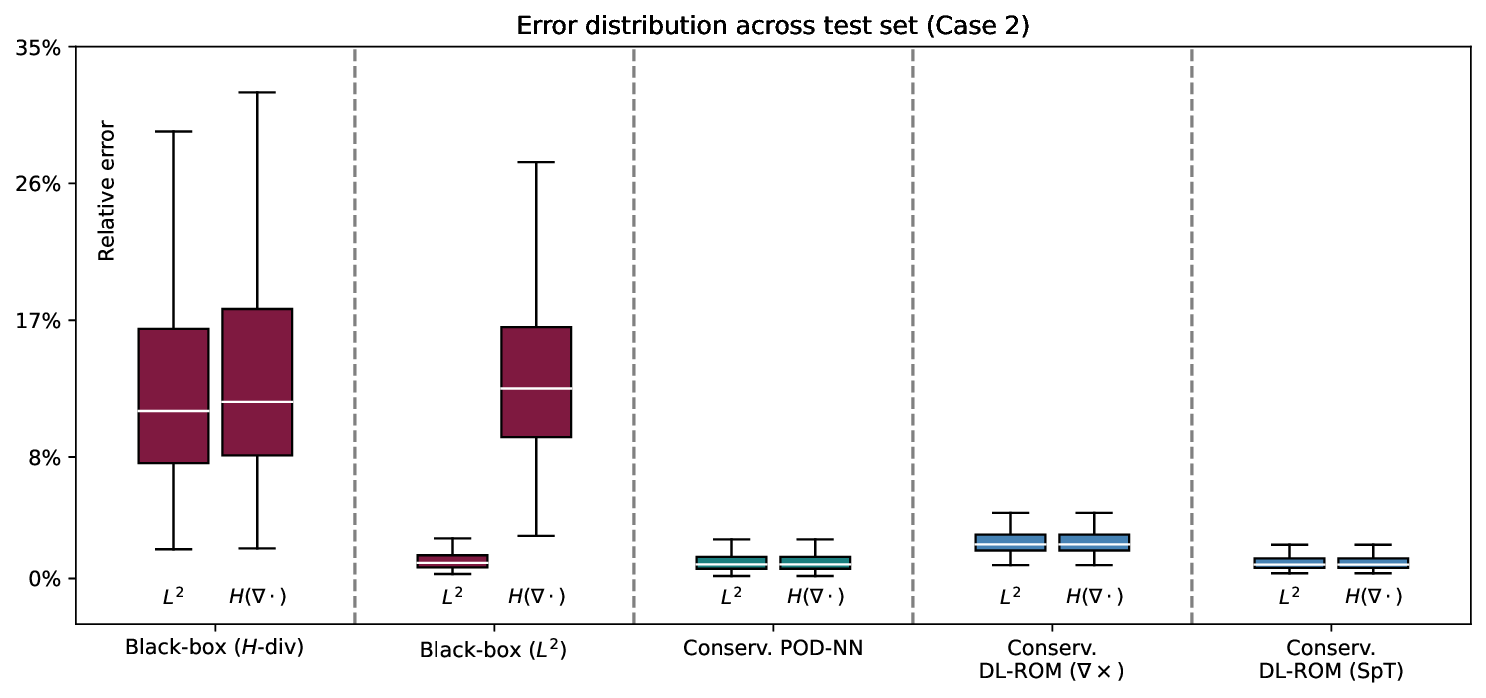}
    \caption{Flux error distributions for the second case study, \Cref{subsec:exp:fractures}.}
    \label{fig:case2boxplots}
\end{figure}

\subsection{Darcy-Forchheimer flow}\label{subsec:exp:nonlinear}

As a final test case, we consider a Darcy-Forchheimer flow system on $\Omega := (0, 1)^2$, given by the following nonlinear system of equations:
\begin{subequations}
\begin{align}
    \kappa_0^{-1}(1 + \kappa_1^{-1} |\bq|) \bq + \nabla p &= 0, &
    \text{in }&\Omega, \\
    \div \bq &= f, &
    \text{in }&\Omega, \\
    p &= g, &
    \text{on }&\partial \Omega.
\end{align}
\end{subequations}

We assume that the source terms, material parameters, and boundary conditions are parametrized as follows,
\begin{subequations}
\begin{align}
    f(x_0, x_1) &= \mu_0 \sin(2 \pi x_0) + (1-\mu_0) \sin(2 \pi x_1), &
    \mu_0 &\in [0, 1], \\
    g(x_0, x_1) &= \mu_1 x_0 x_1, & \mu_1 &\in [0, 1],\\
    \kappa_0 &= 10^{\mu_2}, \
    \kappa_1 = 10^{\mu_3}, &
    \mu_2 &\in [-2, 1], \
    \mu_3 \in [0, 2].
\end{align}
\end{subequations}
For the FOM solver, we rely on a structured triangular grid of size $h = 1/32$. We generate a total of $N_{s}=800$ training snapshots and $N_{\text{test}}=200$ testing instances.

We remark that this problem constitutes a prototypical scenario in which our reduced order modeling approach would be of particular interest because i) it features a fairly expensive FOM (nearly 1m 15s per simulation, too costly for multi-query applications) and ii) it concerns a nonlinear PDE, which poses significant challenges for intrusive ROMs based on linear projections, such as, e.g., POD-Galerkin methods.
As before, all the technical details about the design of ROMs and DNN architectures can be found in \Cref{sec: Appendix - Architectures}.

The results are presented in Figures \ref{fig:case3all}-\ref{fig:case3boxplots} and \Cref{tab:exp:nonlinear}. As for the previous test cases, the $H(\vdiv)$-black-box model ranks as the worst performer, highlighting once again how ``physics-informed'' loss functions can be a double edged sword. 
On the other hand, while the $L^{2}$-surrogate appears to be reasonably accurate, it is still physically inconsistent, as it returns flux fields that violate conservation of mass, cf. \eqref{eq:conservative inequality} and \Cref{tab:exp:nonlinear}. This is also evident in Figure \ref{fig:case3boxplots}, where we clearly see that the $H(\vdiv)$ errors are much higher and much more spread compared to the $L^{2}$ ones.

In contrast, all conservative ROMs show satisfactory results, both in terms of flux and pressure approximation, with Curl-DL-ROM reporting the best performance. This applies not only to the average expressivity (\Cref{tab:exp:sines}), but also to the overall quality of the approximation, as depicted in Figure \ref{fig:case3boxplots}. There, in fact, we see that the relative errors of conservative ROMs are subject to fluctuations, but always fall below 6\%. This is especially true for Conservative POD-NN and Conservative Curl-DL-ROM, whose performance is proven to be rather robust, as indicated by the width of the box plots. For instance, we see that more than 150 out of 200 test instances were approximated with $L^{2}$ errors below $3\%.$

\begin{figure}
    \centering
    \includegraphics[width=0.3\textwidth]{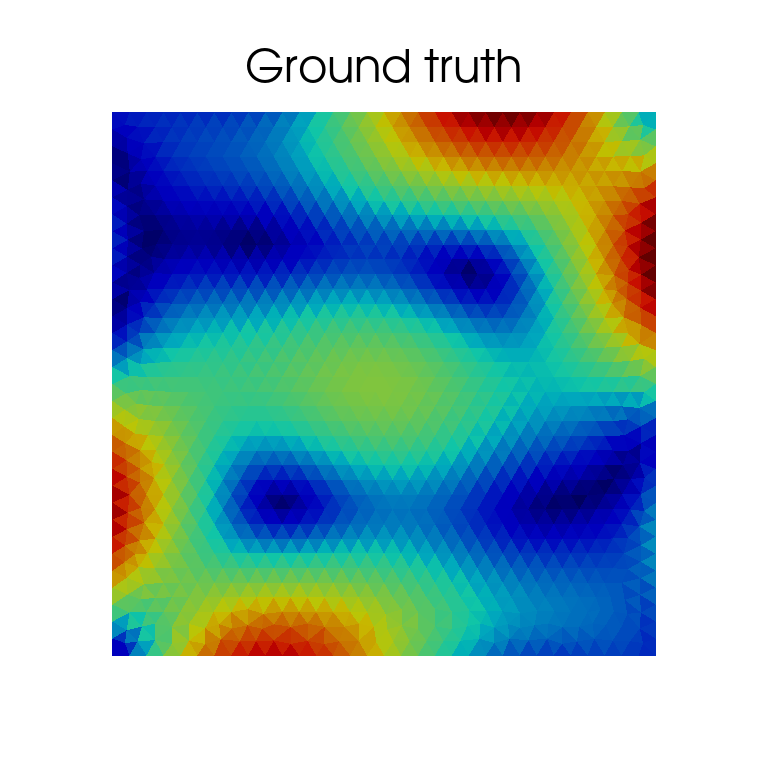}
    \includegraphics[width=0.3\textwidth]{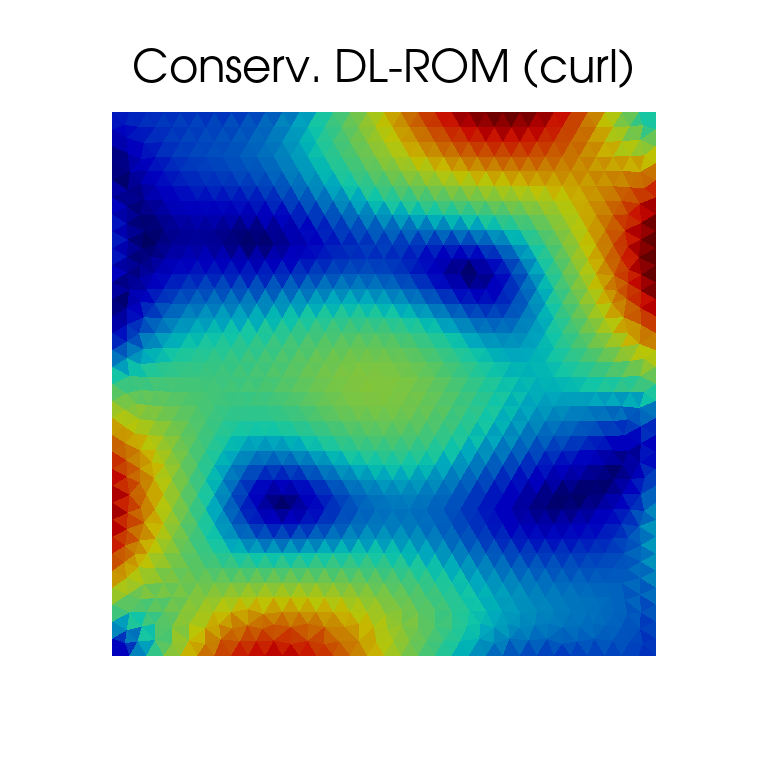}
    \includegraphics[width=0.3\textwidth]{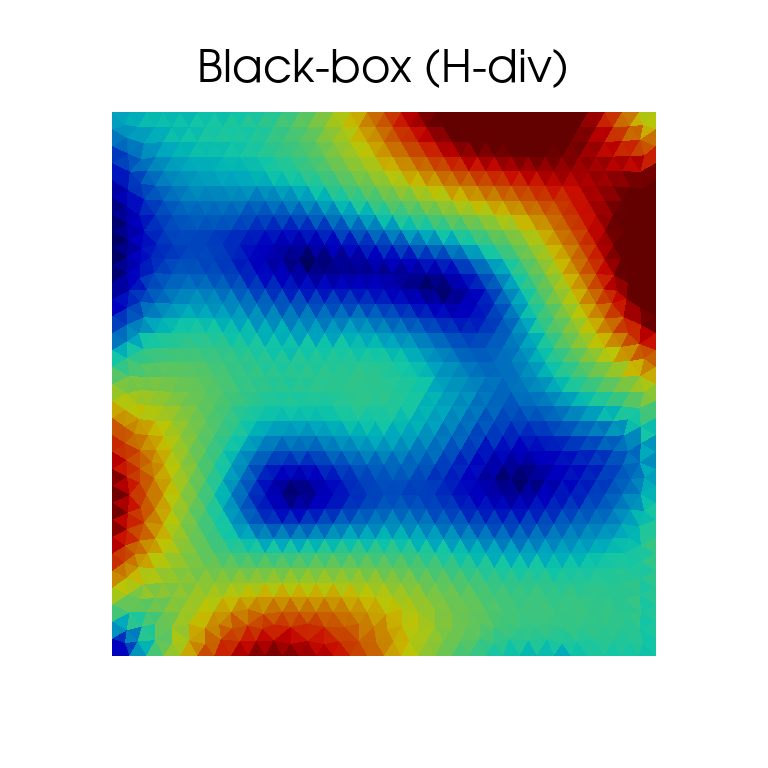}
    \includegraphics[height=3.45cm]{fig/case1/case1flow-bar.png}
    \\\vspace{-0.5cm}
    \includegraphics[width=0.3\textwidth]{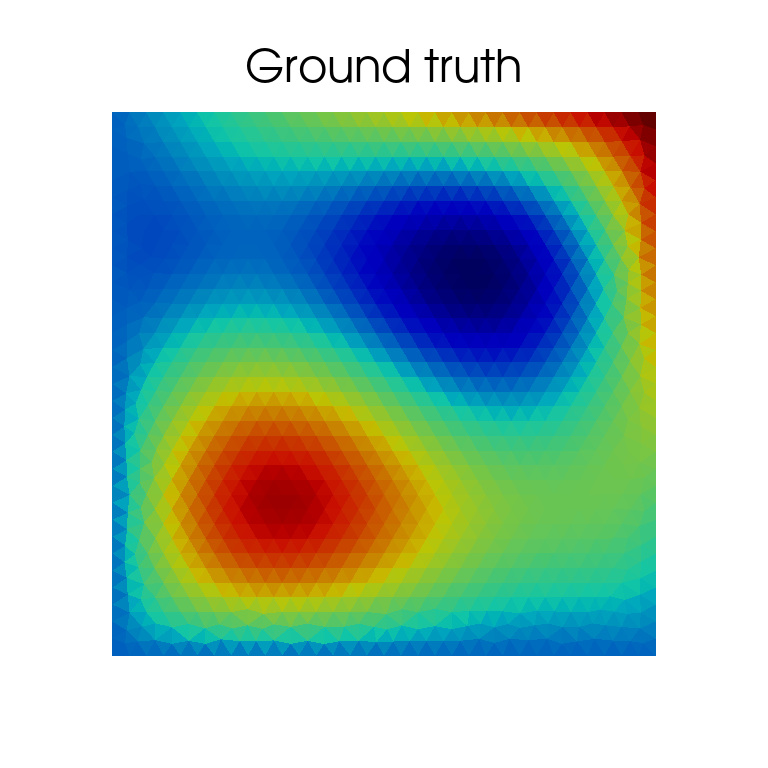}
    \includegraphics[width=0.3\textwidth]
    {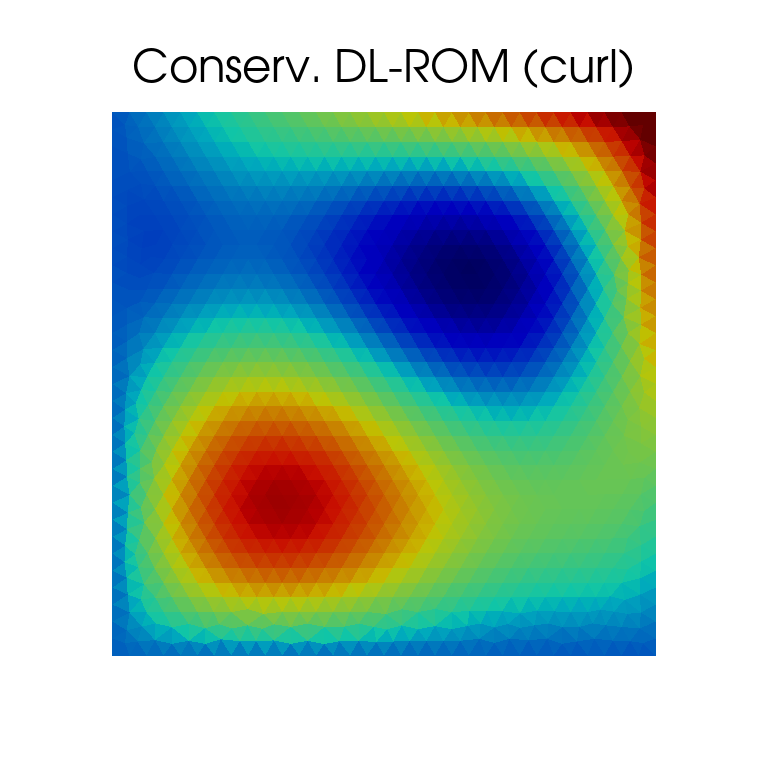}
    \includegraphics[width=0.3\textwidth]{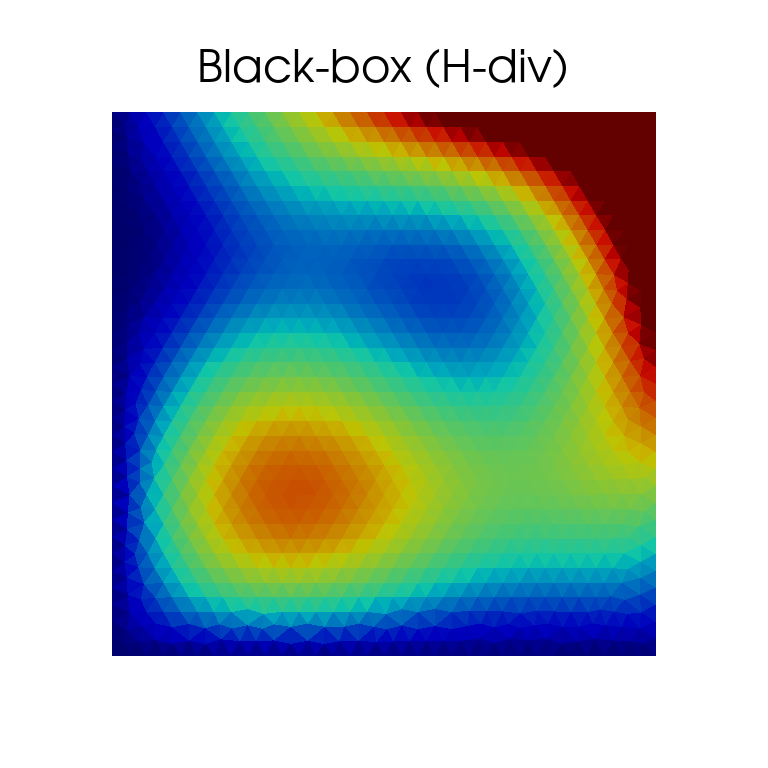}
    \includegraphics[height=3.45cm]{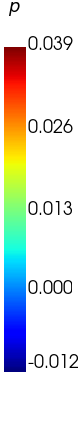}
    \caption{Magnitude of the velocity fields $|\bq|$ (top) and pressure fields $p$ (bottom) for Case 3, \Cref{subsec:exp:sines}. FOM solution vs ROM approximations for an unseen configuration of the problem parameters, $\bmu= [0.388, 0.040, -0.193, 0.744].$}
    \label{fig:case3all}
\end{figure}
\begin{table}
    \label{tab:case3errors}
    \centering
    \caption{Models comparison for the third case study, \Cref{subsec:exp:nonlinear}. Table entries read as in Table \ref{tab:exp:sines}.}
    \begin{tabular}{lllllr}
    \hline
     \textbf{Model} & \textbf{Map onto}\;\;& \textbf{$L^{2}$ error} & \textbf{$H(\vdiv)$ error} & \textbf{$L^{2}$ error}&\textbf{Training time}\vspace{-0.4em}\\
     &\textbf{kernel} $S_{0}$\;\;&Flow $\bq$&Flow $\bq$&Pressure $p$&\\
     \hline\hline
     Black-box $L^{2}$& None & \;\;\;2.71\% &\;\;\;\;\;\;\;\;\;\;\;4.03\% &\hspace{1.5em}3.47\%& 3m 52.5s\\
     Black-box $H(\vdiv)$& None & \;13.45\% &\;\;\;\;\;\;\;\;\;\;\;3.12\% & \;\;\;\;20.18\% & 4m 42.4s\\
     Conserv. POD-NN & $V_n$ & \;\;\;2.67\% &\;\;\;\;\;\;\;\;\;\;\;0.64\% & \hspace{1.5em}4.03\%& 2m 32.1s\\
     \rowcolor{Gray} Conserv. DL-ROM & $\curl$ & \;\;\;2.31\%&\;\;\;\;\;\;\;\;\;\;\;0.63\%&\hspace{1.5em}2.80\%& 4m 19.7s\\
     Conserv. DL-ROM & $I-S_{I}B$ & \;\;\;2.83\% &\;\;\;\;\;\;\;\;\;\;\;0.87\% &\hspace{1.5em}3.29\% & 7m 14.1s\\\hline\\
    \end{tabular}
    \label{tab:exp:nonlinear}
\end{table}
\begin{figure}
    \centering
    \includegraphics[width=\textwidth]{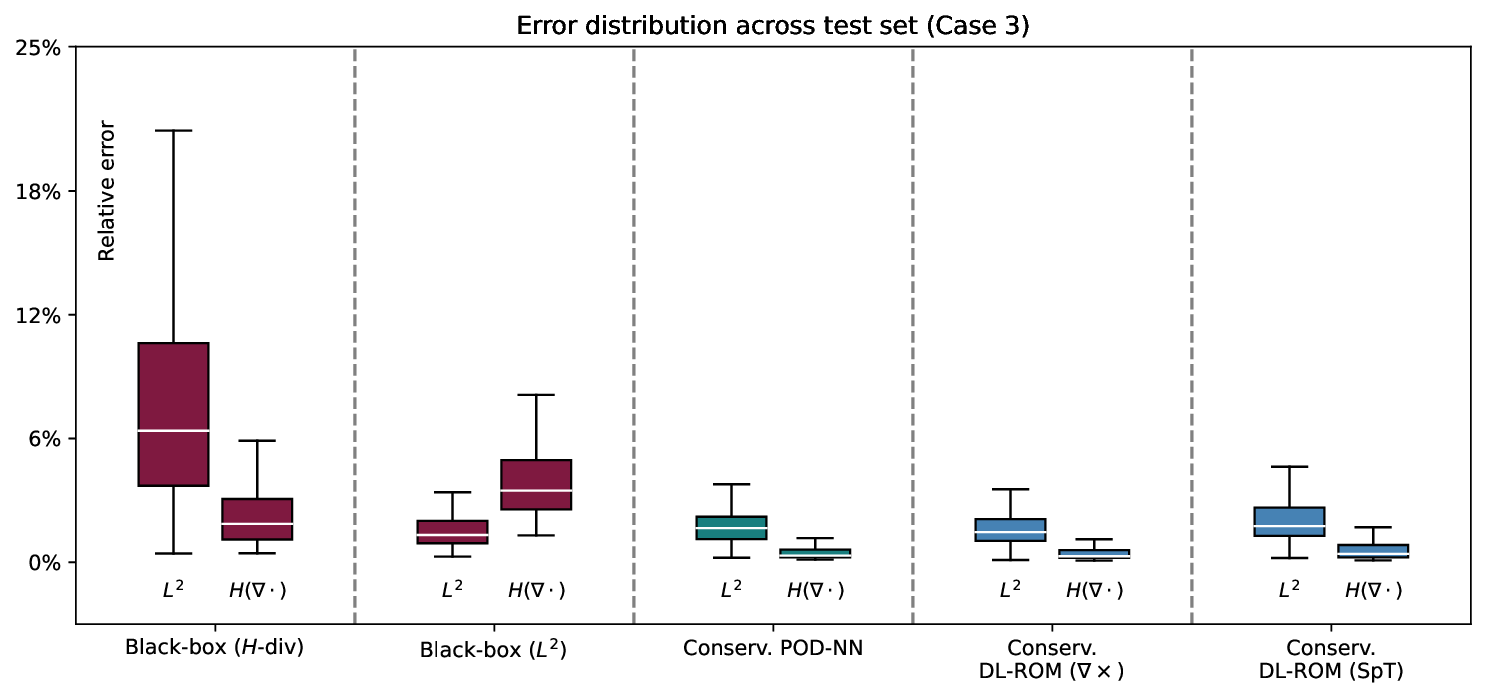}
    \caption{Flux error distributions for the third case study, \Cref{subsec:exp:nonlinear}.}
    \label{fig:case3boxplots}
\end{figure}

\section{Concluding remarks}
\label{sec: Concluding remarks}

We proposed an innovative Deep Learning strategy for the model order reduction of Darcy flow systems, in the context of partial differential equations with linear constraints. The methodology hinges on the idea of splitting the flux into homogeneous and particular solutions, employing a neural network architecture and a kernel projector for the approximation of the former, and an efficient spanning tree algorithm for the latter.

Leveraging on this idea, we proposed three
model order reduction strategies,
all adhering to linear constraints and based upon neural networks: 
\begin{itemize}
	\item[a)] \emph{Conservative POD-NN}: an approach based on a data-driven potential subspace; 
	\item[b)] \emph{Curl DL-ROM}: a technique based on the theory of differential complexes;
	\item[c)] \emph{SpT DL-ROM}: a strategy that relies entirely on an efficient spanning tree solve.
\end{itemize}
To substantiate the efficacy of the proposed strategies, we compared their performances with those attained by naive DNN regression (or "black-box" approaches). This comparative analysis was carried out across three different test cases: a 
2D problem with a non-trivial dependency on the model parameters, a mixed-dimensional model of flow in fractured porous medium, and a non-linear Darcy-Forchheimer flow system.


Our numerical experiments show that all conservative approaches can successfully incorporate the linear constraint, thus overcoming the mass conservation issues arising in black-box approaches. This is made possible by our construction with a potential space and, as the experiments show, the same result cannot be easily achieved by simply including the linear constraint within the loss function. In fact, as testified by the poor performance of the $H(\vdiv)$ black-box surrogate, the inclusion of a divergence term does not invariably yield superior results. Instead, it may actually impair ROM performance. 


We moreover observe that none of the three approaches, conservative POD-NN, Curl DL-ROM and SpT DL-ROM, is unquestionably better than the others for all problems. The two DL-ROM techniques are better suited if the PDE features either strong nonlinearities or space-varying parameters. Otherwise, the conservative POD-NN can be a good fit, with high accuracy and reduced training times. It is noteworthy that although our construction was focused on providing a reliable approximation of the flux, the three approaches were still capable of retrieving a good approximation of the pressure fields as well. In general, the quality of such an approximation was better for ROMs that were sufficiently $L^{2}$-accurate on the flux. In this context, although our findings are promising, there might still be room for improvement.



Our proposal stands as a data-driven, but physical, non-intrusive alternative to other well-established model order reduction techniques, such as the Reduced Basis method. In particular, it can be of high interest whenever intrusive strategies are unavailable, possibly due to high computational costs or code inaccessibility, or when projection-based methods encounter substantial difficulties, e.g. in case of slow decay in the Kolmogorov $n$-width. 
Future research may explore mechanisms to embed deeper physical understanding into the models, ensuring adherence to inherent physical laws and constraints, and investigating the integration of deep learning techniques with traditional numerical schemes across various computational domains. 







\bibliographystyle{spmpsci}
\bibliography{references}

\section*{Declarations}

The authors have no relevant financial or non-financial interests to disclose.
The datasets and source code generated and analyzed during the current study are available in the repository \url{https://github.com/compgeo-mox/conservative_ml}.

\appendix
\captionsetup{font={small}}
\section{Neural network architectures}
\label{sec: Appendix - Architectures}

In this Section, we report all the technical details about the neural network architectures employed across the three experiments. We mention that all DNN architectures were constructed using dense layers, primarily relying on the 0.1-leakyReLU activation. All networks were trained using the L-BFGS optimizer with standard parameter values (learning rate = 1) and for a total of 500 epochs.

We recall that a dense layer with input dimension $m_{1}$, output dimension $m_{2}$ and activation $\rho:\mathbb{R}\to\mathbb{R}$ is a nonlinear map from $\mathbb{R}^{m_{1}}\to\mathbb{R}^{m_{2}}$ of the form
$$\mathbf{v}\mapsto \rho\left(\mathbf{W}\mathbf{v}+\mathbf{b}\right)$$
where $\mathbf{W}\in\mathbb{R}^{m_{2}\times m_{1}}$ is the weight matrix, $\mathbf{b}\in\mathbb{R}^{m_{2}}$ the bias vector and, with little abuse of notation, $$\rho([w_{1},\dots,w_{m}]^{T}):=[\rho(w_{1}),\dots,\rho(w_{m})]^{T}.$$
Then, classical deep feed forward neural networks (such as those employed here) are obtained via sequential composition of multiple layers.

N.B.: in what follows, $\rho$ is always assumed to be the 0.1-leakyReLU activation, namely, $$\rho:\;\;x\mapsto 0.1x\mathbf{1}_{(-\infty,0)}(x)+x\mathbf{1}_{[0,+\infty)}(x).$$

\subsection{Architectures for Case 1}
For this test case, all architectures employ a \textit{feature layer}, whose purpose is to provide a preliminary preprocessing of the input parameters. 
The latter consists of a non-learnable map $F:\mathbb{R}^{2}\to\mathbb{R}^{225}$ acting as
$$[\mu_{0},\mu_{1}]\mapsto \left[\sin\left(2\pi\mu_{0}\frac{\text{mod}(j,15)}{14}\right)\sin\left(2\pi\mu_{1}\frac{j-\text{mod}(j,15)}{14\cdot15}\right)\right]_{j=1}^{225}.$$
That is, $F$ maps the two parameters $[\mu_{0},\mu_{1}]$ onto a discrete representation of the source term $f^{\mu_{0},\mu_{1}}(x_{0},x_{1})=\sin(\mu_{0}2\pi x_{0})\sin(\mu_{1}2\pi x_{1})$, obtained by evaluating the latter over a uniform $15\times15$ grid. As the PDE solution only depends on $[\mu_{0},\mu_{1}]$ through $f^{\mu_{0},\mu_{1}}$, this transformation can be seen as a natural preprocessing of the input data.\\

\setcounter{table}{0}
\captionsetup[table]{skip=0pt,singlelinecheck=off}
\begin{table}[ht!]
    \centering
    \caption{Potential network for the Conservative POD-NN approach. The output dimension coincides with the POD space dimension, here $n=100.$ The first layer is not learnable and coincides with the feature map $F$.\vspace{-0.5em}}
    \begin{tabular}{l}
         \hspace{\textwidth}
    \end{tabular}
    \centering
    \begin{tabular}{llll}
    \hline\hline
    \textbf{Layer}  &  \textbf{Input} &  \textbf{Output} & \textbf{Activation}\\\hline
    Feature extractor & 2  & 225 & -\\
    Dense & 225  & 100 & $\rho$\\
    Dense & 100  & 100 & $\rho$\\
    Dense & 100  & 100 & -
    \\\hline\hline
    \end{tabular}
\end{table}

\begin{table}[ht!]
\caption{Architectures of the Conservative DL-ROM approach (both versions). Potential network on the left (above dashed line = $\phi$, below dashed line = $\Psi$), auxiliary encoder module on the right. Here, $\dim(\bR)$ equals either 1265 or 3664 depending on the ROM variation, that is, Curl and SpT, respectively.\vspace{-0.5em}\\} 
    \begin{tabular}{clcc}
    Potential network $\mathcal{N}=\Psi\circ\phi$ & \hspace{3.25cm} & Encoder $\Psi'$ & \hspace{3.25cm}
    \end{tabular}
    \begin{minipage}{0.49\textwidth}
    \begin{tabular}{llll}
    \hline\hline
    \textbf{Layer}  &  \textbf{Input} &  \textbf{Output} & \textbf{Activ.}\\\hline
    Feature extractor & 2  & 225 & -\\
    Dense & 225  & 100 & $\rho$\\\hdashline
    Dense & 100  & 200 & $\rho$\\
    Dense & 200  & $\dim(\bR)$ & -
    \\\hline\hline
    \end{tabular}
    \end{minipage}\hfill
    \begin{minipage}{0.49\textwidth}
     \begin{tabular}{llll}
    \hline\hline
    \textbf{Layer}  &  \textbf{Input} &  \textbf{Output} & \textbf{Activ.}\\\hline
    Dense & 3664  & 100 & $\rho$\\
    Dense & 100  & 100 & $\rho$
    \\\hline\hline\\\\
    \end{tabular}
    \end{minipage}
\end{table}

\begin{table}[ht!]
    \centering
    \caption{Architecture for the black-box surrogates.\vspace{-0.5em}}\begin{tabular}{c}
         \hspace{\textwidth}
    \end{tabular}
    \begin{tabular}{llll}
    \hline\hline
    \textbf{Layer}  &  \textbf{Input} &  \textbf{Output} & \textbf{Activation}\\\hline
    Feature extractor & 2  & 225 & -\\
    Dense & 225  & 100 & $\rho$\\
    Dense & 100  & 200 & $\rho$\\
    Dense & 200  & 500 & $\rho$\\
    Dense & 500  & 3664 & -
    \\\hline\hline
    \end{tabular}    
\end{table}

\newpage
\subsection{Architectures for Case 2}
We report the architectures for the 2nd test case. We recall that the problem features $5$ scalar parameters, while the FOM dimension (for the flux field) is $17249$.

\begin{table}[ht!]
\caption{Conservative POD-NN, potential network. Output dimension = POD space dimension, here $n=10.$\vspace{-0.5em}}
    \begin{tabular}{l}
         \hspace{\textwidth}
    \end{tabular}
    \centering
    \begin{tabular}{llll}
    \hline\hline
    \textbf{Layer}  &  \textbf{Input} &  \textbf{Output} & \textbf{Activation}\\\hline
    Dense & 5 & 200 & $\rho$\\
    Dense & 200  & 100 & $\rho$\\
    Dense & 100  & 10 & -
    \\\hline\hline
    \end{tabular}\vspace{-1em}
    
\end{table}

\begin{table}[ht!]
\caption{Architectures of the Conservative DL-ROM approach (both versions). Potential network on the left (above dashed line = $\phi$, below dashed line = $\Psi$), auxiliary encoder module on the right.
    Here, $\dim(\bR)$ equals either 11182 or 17249 depending on the ROM variation, that is, Curl and SpT, respectively.\vspace{-0.5em}\\} 
    \begin{tabular}{clcc}
    Potential network $\mathcal{N}=\Psi\circ\phi$ & \hspace{3.25cm} & Encoder $\Psi'$ & \hspace{3.25cm}
    \end{tabular}
    \begin{minipage}{0.49\textwidth}
    \begin{tabular}{llll}
    \hline\hline
    \textbf{Layer}  &  \textbf{Input} &  \textbf{Output} & \textbf{Activ.}\\\hline
    Dense & 5  & 200 & $\rho$\\
    Dense & 200  & 10 & $\rho$\\\hdashline
    Dense & 10   & 500 & $\rho$\\
    Dense & 500  & $\dim(\bR)$ & -
    \\\hline\hline
    \end{tabular}\end{minipage}
    \hfill\begin{minipage}{0.49\textwidth}
     \begin{tabular}{llll}
    \hline\hline
    \textbf{Layer}  &  \textbf{Input} &  \textbf{Output} & \textbf{Activ.}\\\hline
    Dense & 17249  & 10 & $\rho$
    \\\hline\hline\\\\\\
    \end{tabular}
    \end{minipage}
\end{table}

\begin{table}[ht!]
    \centering
    \caption{Architecture for the black-box surrogates.\vspace{-0.5em}}\begin{tabular}{c}
         \hspace{\textwidth}
    \end{tabular}
    \centering
    \begin{tabular}{llll}
    \hline\hline
    \textbf{Layer}  &  \textbf{Input} &  \textbf{Output} & \textbf{Activation}\\\hline
    Dense & 5    & 200 & $\rho$\\
    Dense & 200  & 10  & $\rho$\\
    Dense & 100  & 500 & $\rho$\\
    Dense & 500  & 17249 & -
    \\\hline\hline
    \end{tabular}
\end{table}

\newpage
\subsection{Architectures for Case 3}
We report the architectures for the last test case (nonlinear flow). We recall that the problem features $4$ parameters, while the FOM dimension (for the flux) is $3664.$

\begin{table}[ht!]
\caption{Potential network for the Conservative POD-NN approach.\vspace{-0.5em}}
    \begin{tabular}{l}
         \hspace{\textwidth}
    \end{tabular}
    \centering
    \begin{tabular}{llll}
    \hline\hline
    \textbf{Layer}  &  \textbf{Input} &  \textbf{Output} & \textbf{Activation}\\\hline
    Dense & 4 & 50 & $\rho$\\
    Dense & 50  & 50 & $\rho$\\
    Dense & 50  & 100 & $\rho$\\
    Dense & 100  & 4 & -
    \\\hline\hline
    \end{tabular}
\end{table}

\begin{table}[ht!]
   \caption{Architectures of the Conservative DL-ROM approach (both versions). Potential network on the left (above dashed line = $\phi$, below dashed line = $\Psi$), encoder module on the right. Here,
    $\dim(\bR)=11182,17249$ for the Curl and SpT variations, respectively.\vspace{-0.5em}\\} 
    \begin{tabular}{clcc}
    Potential network $\mathcal{N}=\Psi\circ\phi$ & \hspace{3.25cm} & Encoder $\Psi'$ & \hspace{3.25cm}
    \end{tabular}
    \begin{minipage}{0.49\textwidth}
    \begin{tabular}{llll}
    \hline\hline
    \textbf{Layer}  &  \textbf{Input} &  \textbf{Output} & \textbf{Activ.}\\\hline
    Dense & 4  & 50 & $\rho$\\
    Dense & 50  & 50 & $\rho$\\
    Dense & 50  & 4 & $\rho$\\\hdashline
    Dense & 4   & 50 & $\rho$\\
    Dense & 50  & $\dim(\bR)$ & -
    \\\hline\hline
    \end{tabular}\end{minipage}
    \hfill
    \begin{minipage}{0.49\textwidth}
     \begin{tabular}{llll}
    \hline\hline
    \textbf{Layer}  &  \textbf{Input} &  \textbf{Output} & \textbf{Activ.}\\\hline
    Dense & 3664  & 10 & $\rho$
    \\\hline\hline\\\\\\\\
    \end{tabular}
    \end{minipage}
\end{table}

\begin{center}
\begin{table}[ht!]
    \centering
    \caption{\centering Architecture for the black-box surrogates.\vspace{-0.5em}}
    \begin{tabular}{c}     \hspace{\textwidth}
    \end{tabular}
    \begin{tabular}{llll}
    \hline\hline
    \textbf{Layer}  &  \textbf{Input} &  \textbf{Output} & \textbf{Activation}\\\hline
    Dense & 4    & 50 & $\rho$\\
    Dense & 50  & 50  & $\rho$\\
    Dense & 50  & 4 & $\rho$\\
    Dense & 4  & 50 & $\rho$\\
    Dense & 50  & 3644 & -
    \\\hline\hline
    \end{tabular}
\end{table}
\end{center}

\section{Mixed-dimensional model for flow in fractured porous media}
\label{sec: Mixed-dimensional model for flow in fractured porous media}

We give a concise exposition of the fracture flow model used in \Cref{subsec:exp:fractures}. The interested reader is referred to \cite{berre2021verification} for more detail concerning the model, \cite{boon2018robust} for the mixed finite element method, and \cite{boon2021functional} for the admissible geometries and mixed-dimensional function spaces.

Let us decompose the domain into subdomains $\Omega_i$ with $d_i$ its dimension and $i$ from the index set $\cI$. Fractures will thus be given by subdomains with $d_i = 2$, fracture intersections are one-dimensional with $d_i = 1$ and the subdomains with $d_i = 0$ represent the intersection points. 
For given $\Omega_i$, let $\partial_j \Omega$ with $d_j = d_i - 1$ be the part of its boundary that coincides geometrically with a lower-dimensional neighbor $\Omega_j$. Let $\bnu_i$ on $\partial_j \Omega$ be the unit normal vector, oriented outward with respect to $\Omega_i$.

We continue with the material parameters. Let $\varepsilon_i$ be a given length constant that represents the fracture aperture if $d_i = 2$. Moreover, $\varepsilon_i^2$ represents the cross-sectional area of intersection lines with $d_i = 1$, and $\varepsilon_i^3$ represents the volume of intersection points for $d_i = 0$. Next, for given fracture conductivity $K_{frac}$, we define the effective conductivities:
\begin{align}
	K_{\parallel, i} &:= \varepsilon_i^{3 - d_i} K_{frac} \text{ on }\Omega_i, & 
	K_{\perp, ij} &:= \varepsilon_i^{3 - d_i}\frac{2}{\varepsilon_j} K_{frac} \text{ on }\partial_j \Omega_i.
\end{align}

For ease of notation, we collect the flux and pressure variables into mixed-dimensional variables, which we denote using a Gothic font:
\begin{align}
    \fp &:= \bigoplus_{\substack{i \in \cI \\ 0 \le d_i \le 3}} p_i, &
    \fq &:= \bigoplus_{\substack{i \in \cI \\ 1 \le d_i \le 3}} q_i.
\end{align}

The fracture flow model, cf. \cite{boon2018robust,berre2021verification}, then takes the form \eqref{eq:darcy-type} with
\begin{subequations}
\begin{align}
	\langle A \fq, \tilde \fq \rangle 
	&:= \sum_{\substack{i \in \cI \\ 1 \le d_i \le 3}} 
		(K_{\parallel, i}^{-1} \bq_i, \tilde \bq_i)_{\Omega_i}
	 + \sum_{\substack{j \in \cI \\ d_j = d_i - 1}} 
	 	(K_{\perp, ij}^{-1} \bnu_i \cdot \bq_i, \bnu_i \cdot \tilde \bq_i)_{\partial_j \Omega_i} \\
	\langle B \fq, \tilde \fp \rangle 
	&:= \sum_{\substack{i \in \cI \\ 1 \le d_i \le 3}} 
		(\div \bq_i, \tilde p_i)_{\Omega_i}
	 - \sum_{\substack{j \in \cI \\ d_j = d_i - 1}} 
	 	(\bnu_i \cdot \bq_i, \tilde p_j)_{\partial_j \Omega_i}
\end{align}
\end{subequations}

The operator $B$ corresponds to the mixed-dimensional divergence operator which we denote by $\fD \cdot \fq$. This operator is part of a differential complex \cite{boon2021functional} and the mixed-dimensional curl $\fD \times$ has the property $\fD \cdot (\fD \times \fr) = 0$ for all $\fr$ in its domain. Precise definitions of the mixed-dimensional curl can be found in \cite{boon2021functional,boon2023reduced,budisa2020mixed}.

To conclude this section, we define the norms on the mixed-dimensional variables as follows:
\begin{subequations}
\begin{align}
	\| \fp \|_P^2
	&:= \sum_{i \in \cI} \| p_i \|_{\Omega_i}^2 \\
	\| \fq \|_Q^2
	&:= \left(\sum_{\substack{i \in \cI \\ 1 \le d_i \le 3}} 
	\| \bq_i \|_{\Omega_i}^2 
	+ \sum_{\substack{j \in \cI \\ d_j = d_i - 1}} 
	\| \bnu_i \cdot \bq_i \|_{\partial_j \Omega_i}^2 \right)
	+ \| \fD \cdot \fq \|_P^2.
\end{align}
\end{subequations}
With a slight abuse of notation, we refer to these norms as $L^2$ and $H(\vdiv)$, respectively, in \Cref{subsec:exp:fractures}.

\end{document}